\documentclass[a4paper,10pt]{amsart}
\usepackage{txfonts,amsfonts}
\usepackage[arrow,matrix]{xy}
\usepackage{amsmath,amssymb,amscd,bbm,amsthm,mathrsfs,dsfont,bm}
\usepackage{extarrows}
\usepackage{latexsym}
\usepackage{graphicx}
\usepackage{color}
\usepackage{xy}
\usepackage{leftidx}
\usepackage{chemarrow}
\input xy
\input xypic
\xyoption{all}
\nopagebreak

\allowdisplaybreaks
\overfullrule=0pt \headsep=0.5 true cm \topmargin 0pt
\evensidemargin=20pt \oddsidemargin=20pt 
\textheight 22.5 true cm \textwidth 15 true cm
\parskip=6pt
\baselineskip=15pt

\newtheorem*{maintheorem}{Theorem}
\newtheorem{theorem}{Theorem}[section]
\newtheorem{lemma}[theorem]{Lemma}

\newtheorem{corollary}[theorem]{Corollary}

\theoremstyle{definition}
\newtheorem{definition}[theorem]{Definition}

\newtheorem{remark}[theorem]{Remark}

\numberwithin{equation}{theorem}

\newtheorem{example}[theorem]{Example}

\DeclareMathOperator{\Hom}{Hom}

\DeclareMathOperator{\uExt}{\underline{Ext}}

\DeclareMathOperator{\uHom}{\underline{Hom}}

\DeclareMathOperator{\id}{id}

\DeclareMathOperator{\Ch}{Ch}
\DeclareMathOperator{\Gr}{Gr}

\DeclareMathOperator{\Cone}{Cone}

\def\bt{\begin{theorem}}
\def\et{\end{theorem}}
\def\bl{\begin{lemma}}
\def\el{\end{lemma}}
\def\br{\begin{remark}}
\def\er{\end{remark}}
\def\bc{\begin{corollary}}
\def\ec{\end{corollary}}

\begin{document}
\title{Ext-algebras of graded skew extensions
}

\author{Y. Shen}

\address{Shen: Department of Mathematics, Zhejiang Sci-Tech University, Hangzhou 310018, China\newline
 \indent \rm{\it E-mail address}:\ yuanshen@zstu.edu.cn}

\author{X. Wang}
\address{Wang: School of Mathematical Sciences, Zhejiang University, Hangzhou 310027, China\newline
 \indent \rm{\it E-mail address}:\ shanshiwangxin@126.com}

\author{G.-S. Zhou}
\address{Zhou: Ningbo Institute of Technology, Zhejiang university, Ningbo 315100, China\newline
 \indent \rm{\it E-mail address}:\ 10906045@zju.edu.cn}

\date{}

\begin{abstract}
In this paper, we study the Ext-algebras of graded skew extensions. For a connected graded algebra $A$ and a graded automorphism $\sigma$, we analyze the Yoneda product of the Ext-algebra of graded skew extension $A[z;\sigma]$, and prove this Ext-algebra is an $R$-smash product of the Ext-algebra of $A$ and the one of polynomial algebra $k[z]$.
\end{abstract}

\subjclass[2010]{16W50, 16S36, 18G15}


\keywords{Skew extensions, Ext-algebras, $R$-smash products}

\maketitle

\section{Introduction}

The Ext-algebra $\uExt_A^*(k,k)$ is a powerful tool to comprehend a connected graded algebra $A$. Equipped with the Yoneda product, Ext-algebras are associative algebras. Such a structure has been discussed, especially for Koszul type algebras (such as \cite{GMMZ,HL,HVZ}). From the view of $A_\infty$-algebras, Ext-algebras carry enough information to recover algebras (\cite{LPWZ3}).


Graded Ore extension is a common constructive method for graded algebras. The usual associative Ext-algebras of Ore extensions have been studied in \cite{Phan} for $\mathcal{K}_2$ algebras. In the study of Artin-Schelter regular algebras and Calabi-Yau algebras, there are many examples coming from graded skew extensions, which are graded Ore extensions with zero derivations. For Koszul algebras, He, Van Oystaeyen and Zhang prove that the Ext-algebras of graded skew extensions are trivial extensions in \cite{HVZ}. The aim of this paper is to give a straightforward description for the Yoneda products of Ext-algebras of graded skew extensions in a general setting.

In fact, graded skew extensions are $R$-smash (twisted tensor) products, namely, tensor vector spaces of algebras with braiding structures. The initial purpose of $R$-smash product is to study factorization and product problems in noncommutative world. In this viewpoint, for a connected graded algebra $A$ and a graded automorphism $\sigma$ of $A$, the graded skew extension $A[z;\sigma]$ can be factorized into $A$ and polynomial algebra $k[z]$. Starting from the free resolutions of trivial modules of $A$ and $k[z]$, we analyze the structure of Ext-algebra of $A[z;\sigma]$ as an associative algebra. The result shows that the Ext-algebra is still an $R$-smash product.

\begin{maintheorem}\label{maintheorem}(Theorem \ref{Ext algebra of Ore extension is an R-smash product})
Let $A$ be a connected graded algebra, and $\sigma$ be an automorphism of $A$. Let $B=A[z;\sigma]$ be a graded skew extension of $A$. Write $E(k[z])$, $E(A)$ and $E(B)$ to be Ext-algebras of $k[z]$, $A$ and $B$, respectively. Then there exists a linear isomorphism $R_E: E(A)\otimes E(k[z])\to E(k[z])\otimes E(A)$ such that as bigraded algebras
$$
E(B)\cong E(k[z])\#_{R_E}E(A)\cong E(A)\#_{R_{E}^{-1}}E(k[z]).
$$
\end{maintheorem}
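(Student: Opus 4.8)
The plan is to build an explicit minimal graded free resolution of the trivial module $k$ over $B=A[z;\sigma]$ out of the corresponding resolutions over $A$ and over $k[z]$, and then to read off both the additive and the multiplicative structure of $E(B)$ from it. First I would fix a minimal graded free resolution $(P_\bullet, d^P)$ of $k$ over $A$, and record the Koszul resolution
$$0 \longrightarrow k[z](-1) \xrightarrow{\ \cdot z\ } k[z] \longrightarrow k \longrightarrow 0$$
of $k$ over $k[z]$, so that $E(k[z]) = k \oplus k\xi$ is the exterior algebra on a single generator $\xi$ in cohomological degree $1$. Exploiting that $B$ is the twisted tensor product of $A$ and $k[z]$ along $R(z \otimes a) = \sigma(a) \otimes z$, I would assemble the twisted tensor product complex $\mathbf{F}_\bullet = P_\bullet \otimes_R Q_\bullet$, whose total term in cohomological degree $n$ is
$$\mathbf{F}_n = \bigl(P_n \otimes_R k[z]\bigr) \oplus \bigl(P_{n-1} \otimes_R k[z](-1)\bigr),$$
a free $B$-module since each $P_p \otimes_R k[z](-c)$ is free over $B$. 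Its differential has a horizontal part coming from $d^P$ (twisted by $\sigma$ whenever $z$ is carried past an $A$-generator) and a vertical part given by $\cdot z$; the point to verify here is that $\mathbf{F}_\bullet$ is exact and resolves $k_B$, which I expect to follow from the standard filtration/comparison argument for twisted tensor products of complexes.

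Because both $P_\bullet$ and $Q_\bullet$ are minimal and the twisting keeps all differential entries inside $B_{\geq 1}$, the resolution $\mathbf{F}_\bullet$ is minimal. Applying $\uHom_B(-, k)$ therefore produces a complex with zero differential, so that as bigraded vector spaces
$$E(B) \;\cong\; E(A)\otimes E(k[z]) \qquad\text{with}\qquad E(B)^n = \bigoplus_{p+q=n}\ E(A)^p\otimes E(k[z])^q.$$
This settles the additive isomorphism, and the entire content then lies in identifying the multiplication.

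The crux is the Yoneda product, which I would compute by lifting cohomology classes to chain endomorphisms of $\mathbf{F}_\bullet$ and composing them. Classes coming from $E(A)$ are lifted by the comparison maps for $P_\bullet$, extended $B$-linearly through the twisted structure, while the generator $\xi\in E(k[z])$ is lifted by the shift coming from the $z$-Koszul factor. Products within each tensor factor reproduce the products of $E(A)$ and of $E(k[z])$, so the essential new computation is the cross relation between $\xi$ and a class $\alpha\in E(A)$. Moving the $z$-lift past the $A$-lift forces each occurrence of $z$ through $d^P$, which replaces $\alpha$ by its image under the algebra automorphism $\sigma^{\sharp}$ of $E(A)$ induced by $\sigma$ (via functoriality of $\Ext$, using that $\sigma$ fixes the augmentation) and introduces a Koszul sign $(-1)^{|\alpha|}$. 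This yields a commutation rule relating $\xi\,\alpha$ and $\alpha\,\xi$ up to the sign $(-1)^{|\alpha|}$ and the automorphism $\sigma^{\sharp}$, which is exactly the data of a bijective twisting map $R_E : E(A)\otimes E(k[z]) \to E(k[z])\otimes E(A)$. Verifying that $R_E$ satisfies the twisting-map axioms (so that $E(k[z])\#_{R_E}E(A)$ is an associative bigraded algebra) and that $E(B)$ coincides with this $R$-smash product then gives the first isomorphism, while invertibility of $\sigma^{\sharp}$ provides $R_E^{-1}$ and the symmetric second isomorphism $E(A)\#_{R_E^{-1}}E(k[z])$.

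I expect the main obstacle to be this last multiplicative computation: constructing the comparison maps on the twisted resolution explicitly enough to extract the exact $\sigma^{\sharp}$-twist and the correct Koszul signs in the cross relation, and then checking the twisting-map axioms for $R_E$. The exactness of $\mathbf{F}_\bullet$ and the additive isomorphism are comparatively routine, but pinning down the braiding $R_E$ itself — rather than merely producing an abstract isomorphism of bigraded vector spaces — is where the careful bookkeeping is unavoidable.
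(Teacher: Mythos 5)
Your proposal is correct and takes essentially the same route as the paper: your twisted tensor complex $P_\bullet\otimes_R Q_\bullet$ is, because the $k[z]$-Koszul factor has only two terms, precisely the paper's mapping cone $F^\centerdot=\Cone\left(\rho_z\otimes_A P^\centerdot\colon B^\sigma(-l)\otimes_AP^\centerdot\to B\otimes_AP^\centerdot\right)$, minimality yields the same additive identification $E(B)\cong E(k[z])\otimes E(A)$, and your predicted cross relation --- the Koszul sign $(-1)^{|\alpha|}$ together with an automorphism of $E(A)$ induced by $\sigma$ (the paper's $\tau$, obtained from a lift $\widetilde{\sigma}\colon{}^{\sigma^{-1}}P^\centerdot\to P^\centerdot$) --- is exactly Lemma \ref{the Yoneda product in E(B)}. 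The one step you can drop is the direct verification of the twisting-map axioms for $R_E$: by Theorem \ref{equivalent conditions of R-smash product}, once $E(\pi_z)$ and $E(\pi_A)$ are bigraded algebra monomorphisms (functoriality, Corollary \ref{E(A) and E(k[z]) are subalgebras fo E(B)}) with $m_{E(B)}\circ\left(E(\pi_z)\otimes E(\pi_A)\right)$ a linear bijection, the smash-product structure with $R_E=m_1^{-1}\circ m_2$ is automatic, associativity coming for free from that of $E(B)$.
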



Here is an outline of the paper. In Section 2, we fix some notations, review definitions of $R$-smash products, and prove a result that Ext-algebras provide a functor from the category of connected graded algebras to the category of bigraded algebras. In Section 3, we focus on the Yoneda product of Ext-algebras of graded skew extensions and give two applications.

Throughout the paper, $k$ is a field and all vector spaces and algebras are over $k$. Unadorned $\otimes$ means $\otimes_k$, and (graded) $k$-dual functor is denoted by $(-)^*$.

\section{Preliminaries}

\subsection{Notations} We give a brief introduction of some notations used in the paper.  We refer \cite{AFH} for more about homological algebras.

Let $A=\bigoplus_{i\in\mathbb{Z}}A_i$ be a graded algebra. We say $A$ is \emph{connected}, if $A_i=0$ for all $i<0$ and $A_0=k$. In this case, we write $\varepsilon_A:A\to k$ for the canonical augmentation of $A$. For a graded automorphism $\sigma$ of $A$, the graded skew extension $A[z;\sigma]$ is obtained by giving the graded polynomial ring $A[z]$ a new multiplication, subject to $za=\sigma(a)z$ for any $a\in A$. We are interested in the case that indeterminant $z$ is of positive degree. This assumption ensures  $A[z;\sigma]$ is connected provided so is $A$.

We denote by $\Gr A$ the category of all graded left $A$-modules with $A$-module homomorphisms of degree zero. For an object $M\in \Gr A$ and an integer $i$, the $i$-th \emph{shift} $M(i)$ is also an object in $\Gr A$  by setting $M(i)_j=M_{i+j}$ for any $j$. Let  $M,N$ be two graded left $A$-modules. For each morphism $f:M\to N$ in $\Gr A$,  there is a morphism from $M(i)$ to $N(i)$ corresponding to $f$, which we still denote by $f$ for simplicity. Write graded vector spaces $\uHom_A(M,N)=\bigoplus_{i\in\mathbb{Z}}\Hom_{\Gr A}(M,N(i))$,  and its derived one $ \uExt^*_A(M,N)$. If $L$ is a right graded $A$-module, tensor product $L\otimes_AM$ is defined to be a graded vector space, where the degree of element $l\otimes m$ is the sum of degrees of homogeneous elements $l$ and $m$.

Let $M$ be a graded left (resp. right) graded $A$-module, and $\nu$ be a graded automorphism of $A$. Then ${^\nu M}$ (resp. ${M^\nu}$) denotes the twisted graded left (resp. right) $A$-module, which equals $M$ as abelian groups with $A$-action $a* m=\nu(a)m$ (resp. $m*a=m\nu(a)$) for all $a\in A$ and $m\in M$.


We denote by $\Ch(\Gr A)$ the category of cochain complexes of graded left $A$-modules with cochain complex morphisms.  For an object $(X^\centerdot,d_X)\in \Ch(\Gr A)$ and  an integer $i$, the \emph{shift} of cochain complex $X[i]^\centerdot$ is a new complex with terms $X[i]^n=X^{n+i}$ and differential $(-1)^id_X$. The endomorphism space $\mathrm{\underline{End}}_A(X^\centerdot)$ is also a cochain complex with $n$-th term $\prod_i\uHom_A(X^i,X^{i+n})$ and differential $d_Xf-(-1)^nfd_X$ for any $f\in \mathrm{\underline{End}}_A(X^\centerdot)^n$. Endowed with the canonical multiplication, $\mathrm{\underline{End}}_A(X^\centerdot)$ is a differential bigraded algebra.  For any graded right $A$-module $L$, we may consider it as a cochain complex concentrated in degree $0$. Then $L\otimes_A X^\centerdot$ is a cochain complex with $L\otimes_A X^n$ as terms and differential $L\otimes_A d_X$.

Let $f:X^\centerdot\to Y^\centerdot$ be a morphism of  cochain complexes in $\Ch(\Gr A)$. The \emph{mapping cone} of $f$ is a cochain complex, denoted by $\Cone(f)$, whose $n$-th term is $X^{n+1}\oplus Y^{n}$  with $n$-th differential
$$
\qquad\qquad\qquad d^n_{\Cone(f)}(x,y)=(-d^{n+1}_X(x),f^n(x)+d^n_Y(y)),\qquad \forall x\in X^{n+1},y\in Y^n.
$$

\subsection{$R$-smash product} $R$-smash product is a kind of solutions for factorization problems, and a more general construction than tensor products. We collect some results of graded version in \cite{CIMZ}.

\begin{definition}
Let $A$ and $B$ be two (bi-)graded algebras, and $R:B\otimes  A\to A\otimes B$ be a (bi-)graded  $k$-linear map. Define $A\#_RB$ to be $A\otimes  B$ as a (bi-)graded vector space with multiplication
$$
m_{A\#_RB}=(m_A\otimes  m_B)(\id_A\otimes R\otimes \id_B),
$$
where $m_A,m_B$ are multiplications of $A,B$ respectively. We say $A\#_RB$ is an \emph{$R$-smash product}, if it is associative and  $1_A\#_R1_B$ is the unit.
\end{definition}

By \cite[Theorem 2.5]{CIMZ}, $R$-smash products always satisfy a \emph{normal} condition, that is, $R(1_B\otimes a)=a\otimes 1_B$ and $R(b\otimes 1_A)=1_A\otimes b$ for any $a\in A,b\in B$. Since the normal condition is a necessary condition for $R$-smash products, we always omit this part for the description of linear maps $R$ in the sequel.

\begin{example}\label{skew extension is smash product}
Let $A$ be a graded algebra, and $\sigma$ be a graded automorphism of $A$. Define a graded linear map
$$
\begin{array}{clll}
R:&k[z]\otimes  A&\to &A\otimes  k[z]\\
&\sum_{i\geq0}c_iz^i\otimes  a&\mapsto&\sum_{i\geq0}\sigma^i(a)\otimes  c_iz^i.
\end{array}
$$
Then $A\#_Rk[z]$ is an $R$-smash product, and the skew extension  $A[z;\sigma]\cong A\#_Rk[z]$.
\end{example}

The following theorem is an important description of $R$-smash products.
\begin{theorem}\label{equivalent conditions of R-smash product}\cite[Theorem 2.10]{CIMZ}
Let $A,B$ and $C$ be (bi-)graded algebras. Then the following two conditions are equivalent
\begin{enumerate}
\item As (bi-)graded algebras, $C\cong A\#_RB$ for some (bi-)graded $k$-linear map $R:B\otimes  A\to A\otimes B$;
\item  there exist (bi-)graded algebra homomorphisms $f_A:A\to C$ and $f_B:B\to C$ such that  $$m_C\circ(f_A\otimes f_B):A\otimes  B\to C$$
 is an isomorphism of (bi-)graded vector spaces, where $m_C$ is the multiplication of $C$.
\end{enumerate}
In this case, $R={\Big (}m_C\circ(f_A\otimes f_B){\Big )}^{-1}\circ m_C\circ(f_B\otimes f_A)$.
\end{theorem}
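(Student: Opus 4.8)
The plan is to prove the two implications separately, with the explicit formula for $R$ emerging naturally out of the second direction. Throughout I abbreviate $\phi:=m_C\circ(f_A\otimes f_B)$. For $(1)\Rightarrow(2)$, suppose $C\cong A\#_RB$ and identify the two; I then take $f_A\colon A\to A\#_RB$, $a\mapsto a\otimes 1_B$, and $f_B\colon B\to A\#_RB$, $b\mapsto 1_A\otimes b$. Using the normal condition $R(1_B\otimes a)=a\otimes 1_B$ together with the smash multiplication $(m_A\otimes m_B)(\id_A\otimes R\otimes\id_B)$, a direct computation gives $f_A(a)f_A(a')=aa'\otimes 1_B$ and likewise $f_B(b)f_B(b')=1_A\otimes bb'$, so both maps are (bi-)graded algebra homomorphisms. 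Applying the normal condition once more shows $\phi(a\otimes b)=f_A(a)f_B(b)=a\otimes b$, i.e. $\phi=\id_{A\otimes B}$ under the identification, which is in particular a (bi-)graded linear isomorphism.

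For $(2)\Rightarrow(1)$, the substantive direction, I am handed (bi-)graded algebra maps $f_A,f_B$ with $\phi$ a (bi-)graded linear isomorphism, and I define $R:=\phi^{-1}\circ m_C\circ(f_B\otimes f_A)\colon B\otimes A\to A\otimes B$. The goal is to show that with this $R$ the space $A\otimes B$ becomes an $R$-smash product and that $\phi$ is an algebra isomorphism onto $C$. The key identity to establish is
$$\phi\circ m_{A\#_RB}=m_C\circ(\phi\otimes\phi).$$
Writing $R(b\otimes a')=\sum_i a_i\otimes b_i$, the defining relation for $R$ reads $\sum_i f_A(a_i)f_B(b_i)=f_B(b)f_A(a')$ in $C$; substituting this into the left-hand side and using that $f_A,f_B$ are homomorphisms collapses the expression to $\bigl(f_A(a)f_B(b)\bigr)\bigl(f_A(a')f_B(b')\bigr)$, which is exactly the right-hand side. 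This is the one genuine computation of the argument.

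Once this identity is in hand, everything else is formal transport of structure. Since $\phi$ is a linear bijection satisfying $m_{A\#_RB}=\phi^{-1}\circ m_C\circ(\phi\otimes\phi)$ and $m_C$ is associative, $m_{A\#_RB}$ is associative; and since $f_A,f_B$ are unital, $\phi(1_A\otimes 1_B)=1_C$, so $1_A\otimes 1_B$ is a two-sided unit. Thus $A\#_RB$ is genuinely an $R$-smash product and $\phi$ is a (bi-)graded algebra isomorphism $A\#_RB\xrightarrow{\sim}C$. The normal condition for this $R$ is automatic: $R(1_B\otimes a)=\phi^{-1}\bigl(f_B(1_B)f_A(a)\bigr)=\phi^{-1}\bigl(f_A(a)\bigr)=a\otimes 1_B$, and symmetrically $R(b\otimes 1_A)=1_A\otimes b$.

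I expect the main obstacle to be purely bookkeeping inside the displayed identity: keeping the Sweedler-type indices, the two multiplications $m_A,m_B$ buried in the smash product, and the homomorphism property of $f_A,f_B$ correctly aligned so that the middle factor $\sum_i f_A(a_i)f_B(b_i)$ can be recognised and replaced by $f_B(b)f_A(a')$. The grading (or bigrading) compatibility never causes difficulty, since $R$, $\phi$, $f_A$ and $f_B$ are all (bi-)graded by construction, so each equality above respects degrees automatically.
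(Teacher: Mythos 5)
Your proof is correct and complete: both implications check out, the key identity $\phi\circ m_{A\#_RB}=m_C\circ(\phi\otimes\phi)$ is verified by exactly the right computation, and your appeal to the normal condition in the direction (1)$\Rightarrow$(2) is legitimate since the paper records (quoting \cite[Theorem 2.5]{CIMZ}) that every $R$-smash product is normal. Note that the paper itself gives no proof of this statement --- it is imported verbatim from \cite[Theorem 2.10]{CIMZ} --- and your argument is precisely the standard factorization argument given there, so there is nothing to reconcile.
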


\subsection{Ext-algebras}

Let $A$ be a connected graded algebra.  There exists a graded free resolution $P^\centerdot$ of ${_Ak}$:
\begin{equation*}
\xymatrix{
P^\centerdot=(\cdots\to A\otimes V_{3}\xlongrightarrow{d^{-3}} A\otimes V_{2}\xlongrightarrow{d^{-2}}A\otimes V_{1}
\xlongrightarrow{d^{-1}}A \to 0 \to\cdots ) \quad \xlongrightarrow{\varepsilon_A} \quad  {_Ak},
}
\end{equation*}
where $V_{n}$ is a graded vector spaces and write $V_0=k$, satisfying $\text{Im}\, d^{-n}\subseteq A_{\geq1}\otimes V_{n-1}$ for all $n=1,2,\cdots$. Such a complex $P^\centerdot$ is called a \emph{minimal free resolution} of ${_Ak}$, which is unique up to isomorphisms in $\Ch(\Gr A)$. It is easy to know that $\uHom_{A}(P^\centerdot,k)$ has zero differential, so the graded vector space $\uExt_A^n(k,k)=\uHom_A(A\otimes V_n, k) \cong V_n^*$ for all $n\geq 0$. Equipped with the Yoneda product, the bigraded vector space
$$
E(A):=\bigoplus_{(n,i)\in\mathbb{Z}^2}\uExt^n_A(k,k)_i
$$
is a bigraded associative algebra, called the \emph{Ext-algebra} of $A$.

In other hand, the cohomology of  differential bigraded algebra $\mathrm{\underline{End}}_A(P^\centerdot)$ is isomorphic to the Ext-algebra $E(A)$. With the help of quasi-isomorphism $\varepsilon_A:P^\centerdot\to {_Ak}$, we have the following quasi-isomorphism
$$
\mathrm{\underline{End}}_A(P^\centerdot)\xlongrightarrow{\uHom_A(P^\centerdot,\varepsilon_A)}   \uHom_A(P^\centerdot,k).
$$

We close this section by a result saying that Ext-algebra arises a contravariant functor from the category of connected graded algebras to the category of bigraded algebras. It maybe a classical result, but we cannot find an appropriate paper to cite. We devote a short space to give a proof.

\begin{theorem}\label{E(-) is a functor}
Taking Ext-algebra $E(-)$ is a contravariant functor from the category of connected graded algebras to the category of bigraded algebras.
\end{theorem}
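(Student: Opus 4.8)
The plan is to define $E$ on objects by the given assignment $A\mapsto E(A)$ and to build the action on morphisms out of restriction of scalars. Given a morphism $\phi:A\to B$ of connected graded algebras, I would first record that $\phi$ is automatically compatible with augmentations: being a graded unital homomorphism with $A_0=k=B_0$, it satisfies $\phi|_{A_0}=\id_k$, hence $\varepsilon_B\circ\phi=\varepsilon_A$. Consequently the restriction-of-scalars functor $\phi^*:\Gr B\to\Gr A$ is exact, graded (it commutes with the shifts $(i)$), and carries the trivial module ${_Bk}$ to ${_Ak}$. This is the structural input from which everything else will be extracted.

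To stay within the framework of Section 2.3, I would realize the morphism map through resolutions. Fix minimal free resolutions $P^\centerdot\to{_Ak}$ over $A$ and $Q^\centerdot\to{_Bk}$ over $B$. Viewing $\phi^*Q^\centerdot\to{_Ak}$ as a (no longer free, but acyclic) resolution of ${_Ak}$ and using that the terms of $P^\centerdot$ are projective $A$-modules, the comparison theorem yields a chain map $\theta:P^\centerdot\to\phi^*Q^\centerdot$ over $\id_k$, unique up to homotopy. I would then define $E(\phi)$ as the map induced on cohomology by the composite
$$
\uHom_B(Q^\centerdot,k)\xlongrightarrow{\mathrm{res}}\uHom_A(\phi^*Q^\centerdot,k)\xlongrightarrow{\theta^*}\uHom_A(P^\centerdot,k),
$$
where the first arrow restricts a $B$-linear map to an $A$-linear one and the second precomposes with $\theta$. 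Since $\theta$ is unique up to homotopy and minimal resolutions are unique up to isomorphism in $\Ch$, the resulting map $E(\phi):E(B)\to E(A)$ is well-defined and independent of all choices; it is bigraded because every arrow preserves both the cohomological degree and the internal grading.

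The substantive point is that $E(\phi)$ is multiplicative for the Yoneda product, and for this I would pass to the Yoneda-extension model of $\uExt$. An element of $\uExt_B^n(k,k)_i$ is represented by an $n$-fold graded extension of ${_Bk}$ by ${_Bk}(i)$, the product is splicing of extensions, and Baer sum provides addition. The key observation is that the map constructed above coincides with the operation ``apply the exact graded functor $\phi^*$ to an extension'': because $\phi^*$ is exact and commutes with shift, it sends an $n$-extension in $\Gr B$ to an $n$-extension in $\Gr A$ of the same internal degree, and it manifestly commutes with splicing and with Baer sum, while fixing the length-$0$ class $\id_k$, i.e.\ the unit. Thus $E(\phi)$ is a homomorphism of bigraded algebras. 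I expect the main obstacle to be precisely the reconciliation of this extension-theoretic description with the resolution formula displayed above (and the careful tracking of the internal degree through it): no single computation is hard, but it is where the bookkeeping concentrates.

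Finally, functoriality is formal. For $A\xrightarrow{\phi}B\xrightarrow{\psi}C$ one has $(\psi\phi)^*=\phi^*\psi^*$ on restricted modules and on extensions, whence $E(\psi\phi)=E(\phi)\circ E(\psi)$, and $(\id_A)^*=\id$ gives $E(\id_A)=\id_{E(A)}$; contravariance is built into the construction. Together these establish that $E(-)$ is a contravariant functor from connected graded algebras to bigraded algebras.
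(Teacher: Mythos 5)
Your proposal is correct, but its core step---multiplicativity of $E(\phi)$---runs along a genuinely different track from the paper. The paper never leaves the resolution framework: it represents classes of $E(B)$ as cochain maps $Q^\centerdot\to{_Bk}(s)[i]$, lifts them via the Comparison Theorem, and proves $E(f)(\beta)\cdot E(f)(\alpha)=E(f)(\beta\cdot\alpha)$ by showing the difference of the two relevant lifts is null-homotopic (using that $\varepsilon_B[i]$ is a quasi-isomorphism and $\uHom_A(P^\centerdot,-)$ preserves quasi-isomorphisms) and then killing the homotopy by the \emph{minimality} of $P^\centerdot$ and $Q^\centerdot$: since $\beta\circ d_Q=0$ and maps to $k$ vanish on $\operatorname{Im}d_P\subseteq A_{\geq1}\otimes V$, composing with $\beta[i]$ erases the homotopy terms. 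You instead switch to the Yoneda-extension model: since $\phi^*:\Gr B\to\Gr A$ is exact, commutes with the shifts $(i)$, and fixes $k$, it carries $n$-extensions to $n$-extensions and visibly commutes with splicing, Baer sum, and the unit, so multiplicativity is automatic. Your route is more conceptual, requires no minimality whatsoever (any projective resolutions would do), and works verbatim for any augmentation-compatible change of rings; what it costs is exactly the point you flag yourself: the identification of the extension-theoretic change-of-rings map with the resolution formula $\gamma\mapsto\gamma\circ\theta$. That compatibility is classical (the standard comparison of Yoneda Ext, with splicing, against derived-functor Ext with the composition product, natural in the ring; see e.g.\ Mac~Lane, \emph{Homology}, Ch.~III), and the internal-degree bookkeeping goes through because $\phi^*$ commutes with shifts---so invoking it closes your argument. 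Alternatively, you could take the extension description as the \emph{definition} of $E(\phi)$, whereupon bigradedness, unitality, $E(\psi\circ\phi)=E(\phi)\circ E(\psi)$ and $E(\id_A)=\id_{E(A)}$ all hold on the nose, and the resolution formula becomes a computational device rather than the definition. The paper's approach buys self-containment: it reuses the minimal-resolution machinery of Section 2.3, which the rest of the paper (Lemmas 3.3--3.5) needs anyway, at the price of the quasi-isomorphism/null-homotopy diagram chase that your argument avoids.
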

\begin{proof}
Let $A$ and $B$ be two connected graded algebras, and $f:A\to B$ be a graded algebra homomorphism. The algebra homomorphism $f$ makes any object in $\Gr B$ also an object in $\Gr A$.  Clearly, $\uHom_B(M,N)\subseteq\uHom_A(M,N)$ for all $M,N\in\Gr B$.

Let $P^\centerdot$ and $Q^\centerdot$ be minimal free resolutions of ${_Ak}$ and ${_Bk}$, respectively. And $Q^\centerdot$ is also a cochain complex in $\Ch(\Gr A)$. By Comparison Theorem, there is a morphism of cochain complexes $\widetilde{f}:P^\centerdot\to Q^\centerdot$ in $\Ch(\Gr A)$ lifting $f$ such that $\varepsilon_B\widetilde{f}=\varepsilon_A$. Define a bigraded linear map
$$
E(f):E(B)= \uHom_B(Q^{\centerdot},k)\hookrightarrow \uHom_A(Q^{\centerdot},k)\xlongrightarrow{\uHom_A(\widetilde{f},k)}\uHom_A(P^\centerdot,k)= E(A),
$$
that is, $E(f)(\gamma)=\gamma\circ \widetilde{f}$ for any $\gamma\in E(B)$.

Let $\alpha\in E^i(B)_s$ and $\beta\in E^j(B)_t$ for any $i,j\geq0$ and $s,t\leq0$.  We also consider them as morphisms $\alpha:Q^{\centerdot}\to {_Bk}(s)[i]$ and $\beta:Q^{\centerdot}\to {_Bk}(t)[j]$ be in $\Ch(\Gr B)$. We lift $\alpha$ to a morphism $\widetilde{\alpha}:Q^\centerdot\to Q(s)[i]^\centerdot$ in $\Ch(\Gr B)$ such that $\varepsilon_B[i]\circ \widetilde{\alpha}=\alpha$. Write $\varphi:=E(f)(\alpha):P^\centerdot\to {_Ak}(s)[i]$, and $\widetilde{\varphi}:P^\centerdot\to P(s)[i]^\centerdot$ to be a lift of $\varphi$ satisfying $\varepsilon_A[i]\circ \widetilde{\varphi}=\varphi$ in $\Ch(\Gr A)$. Then we obtain the following commutative diagram in $\Ch(\Gr A)$.
$$
\xymatrix{
Q^\centerdot\ar[rd]^{\alpha}\ar[dd]^{\widetilde{\alpha}}               &             &       P^\centerdot\ar[ll]_{\widetilde{f}}\ar[ld]_{\varphi}\ar[dd]_{\widetilde{\varphi}}\\
                                           &     {_A}k(s)[i]      &          \\
Q(s)[i]^\centerdot\ar[ru]^{\varepsilon_B[i]}            &             &       P(s)[i]^\centerdot\ar[lu]_{\varepsilon_A[i]}\ar[ll]_{\widetilde{f}[i]}
}
$$
 So $
\varepsilon_B[i]\circ(\widetilde{f}[i]\circ\widetilde{\varphi}-\widetilde{\alpha}\circ\widetilde{f})=0.
$
Notice that $\varepsilon_B[i]$ is a quasi-isomorphism and the functor $\uHom_A(P^\centerdot,-)$ preserves quasi-isomorphism, we have that $\widetilde{f}[i]\circ\widetilde{\varphi}-\widetilde{\alpha}\circ\widetilde{f}:P^\centerdot\to Q(s)[i]^\centerdot$ is null homotopic. By the minimality of $P^\centerdot$ and $Q^\centerdot$, we have $\beta[i]\circ\widetilde{f}[i]\circ\widetilde{\varphi}=\beta[i]\circ\widetilde{\alpha}\circ\widetilde{f}$, and
$$
E(f)(\beta)\cdot E(f)(\alpha)=(\beta\circ\widetilde{f})[i]\circ \widetilde{\varphi}=\beta[i]\circ\widetilde{f}[i]\circ \widetilde{\varphi}=\beta[i]\circ\widetilde{\alpha}\circ \widetilde{f}=E(f)(\beta\cdot \alpha).
$$
Hence, $E(f)$ is a bigraded algebra homomorphism.

It remains to show the given rule of associating morphisms for $E(-)$  satisfies the requirement for contravariant functors. It is obvious that $E(\id_A)=\id_{E(A)}$. Let $C$ be another connected graded algebra, and $g:B\to C$ be a graded algebra homomorphism.  Let $F^\centerdot$ be a minimal free resolution of ${_Ck}$. By Comparison Theorem, there is a morphism of cochain complexes $\widetilde{g}:Q^\centerdot\to F^\centerdot$ lifting $g$, satisfying $\varepsilon_C\widetilde{g}=\varepsilon_B$. It is clear $\widetilde{g}\circ\widetilde{f}$ is a morphism of cochain complexes from $P^\centerdot$ to $F^\centerdot$ induced by $g\circ f$, such that  $\varepsilon_C\circ(\widetilde{g}\circ\widetilde{f})=\varepsilon_A$. Therefore, $E(g\circ f)=E(f)\circ E(g)$.
\end{proof}

%

\section{The Yoneda products of Ext-algebras of graded skew extensions}\label{Section Yoneda product of Ext-algebras}
This section focuses on the Yoneda products of Ext-algebras of graded skew extensions. Let $A$ be a connected graded algebra and let $\sigma$ be a fixed graded automorphism of $A$. Let $B=A[z;\sigma]$ be the graded skew extension of $A$, where $\deg z=l\geq1$.

There are two natural graded algebra monomorphisms:
$$
\iota_A:A\to B,\quad\quad \iota_{z}:k[z]\to B,
$$
and two graded algebra epimorphisms:
\begin{equation*}
\begin{array}{cclc}
\pi_A:&B&\to &A\\
~&\sum_{i=0}^ma_iz^i&\mapsto&a_0,
\end{array}\quad
\begin{array}{cclc}
\pi_{z}:&B&\to &k[z]\\
~&\sum_{i=0}^ma_iz^i&\mapsto&\sum_{i=0}^m\varepsilon_A(a_i)z^i,
\end{array}
\end{equation*}
satisfying $\pi_A\circ \iota_A=\id_A, \pi_z\circ \iota_z=\id_{k[z]}$. \begin{corollary}\label{E(A) and E(k[z]) are subalgebras fo E(B)}
The bigraded algebra homomorphisms $E(\pi_A):E(A)\to E(B)$ and $E(\pi_z):E(k[z])\to E(B)$ are both injective.
\end{corollary}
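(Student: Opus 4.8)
The plan is to deduce this purely formally from the functoriality established in Theorem \ref{E(-) is a functor}, using the splitting relations $\pi_A\circ\iota_A=\id_A$ and $\pi_z\circ\iota_z=\id_{k[z]}$ that were recorded just above the statement. The key point is that a contravariant functor carries a split epimorphism to a split monomorphism, and a split monomorphism of bigraded vector spaces is in particular injective.

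Concretely, I would first apply $E(-)$ to the identity $\pi_A\circ\iota_A=\id_A$. Since $\pi_A:B\to A$ and $\iota_A:A\to B$ are graded algebra homomorphisms, Theorem \ref{E(-) is a functor} gives bigraded algebra homomorphisms $E(\pi_A):E(A)\to E(B)$ and $E(\iota_A):E(B)\to E(A)$, and contravariance yields
$$
E(\iota_A)\circ E(\pi_A)=E(\pi_A\circ\iota_A)=E(\id_A)=\id_{E(A)}.
$$
Thus $E(\iota_A)$ is a left inverse of $E(\pi_A)$, so $E(\pi_A)$ is injective. The identical argument applied to $\pi_z\circ\iota_z=\id_{k[z]}$ gives $E(\iota_z)\circ E(\pi_z)=\id_{E(k[z])}$, whence $E(\pi_z)$ is injective as well. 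This completes the proof.

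There is essentially no serious obstacle here: the entire homological content—constructing the maps $E(\pi_A)$, $E(\pi_z)$ and verifying that $E(-)$ respects composition and identities—has already been absorbed into Theorem \ref{E(-) is a functor}. The only thing I would double-check is the bookkeeping of the direction of composition: because $E(-)$ is \emph{contravariant}, the relation $\pi_A\circ\iota_A=\id_A$ must be read as giving $E(\iota_A)$ as a \emph{left} inverse (not a right inverse) of $E(\pi_A)$, which is exactly what injectivity requires. Since $E(\pi_A)$ and $E(\pi_z)$ are bigraded algebra homomorphisms and admit left inverses as linear maps, their injectivity is immediate, so no further argument is needed.
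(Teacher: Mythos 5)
Your proof is correct and follows exactly the paper's own argument: apply the contravariant functor $E(-)$ of Theorem \ref{E(-) is a functor} to the splittings $\pi_A\circ\iota_A=\id_A$ and $\pi_z\circ\iota_z=\id_{k[z]}$ to get $E(\iota_A)\circ E(\pi_A)=\id_{E(A)}$ and $E(\iota_z)\circ E(\pi_z)=\id_{E(k[z])}$, whence injectivity. Your write-up is in fact slightly more careful than the paper's, which contains a typographical slip (writing $E(\iota_B)\circ E(\pi_B)=\id_{E(B)}$ where $E(\iota_z)\circ E(\pi_z)=\id_{E(k[z])}$ is meant).
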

\begin{proof}
By Theorem \ref{E(-) is a functor}, $E(-)$ is a contravariant functor. Then
$$E(\iota_A)\circ E(\pi_A)=\id_{E(A)}\quad\text{ and }\quad E(\iota_B)\circ E(\pi_B)=\id_{E(B)}.$$
Therefore $E(\pi_A)$ and $E(\pi_B)$ are both injective.
\end{proof}

In the viewpoint of $R$-smash products, graded skew extension $B$ can be factored into two subalgebras $A$ and $k[z]$. Following Corollary \ref{E(A) and E(k[z]) are subalgebras fo E(B)}, $E(A)$ and $E(k[z])$ are both bigraded subalgebras of $E(B)$. A natural question is whether $E(B)$ is a smash product of $E(A)$ and $E(k[z])$. We devote the rest of this section to give a positive answer.

There is a classical free resolution of trivial module ${_Bk}$ constructed in \cite{GS,Phan}. We make a little modification to the free resolution for the graded skew extension. Let $P^\centerdot$ be a minimal free resolution of ${_Ak}$:
$$
P^\centerdot=(\cdots\xlongrightarrow{} A\otimes V_m\xlongrightarrow{d^{-m}}A\otimes V_{m-1}\xlongrightarrow{d^{-m+1}}\cdots\xlongrightarrow{}A\otimes V_1
\xlongrightarrow{d^{-1}}A\to 0\to\cdots)\quad \xlongrightarrow{\varepsilon_A}\quad {_Ak},
$$
where each $V_i$ is a graded vector space.

We have a graded $(B,A)$-bimodule homomorphism $\rho_z:B^\sigma(-l)\xlongrightarrow{-\cdot z} B$. Define a cochain complex in $\Ch (\Gr B)$
$$
F^\centerdot:=\Cone\left(\rho_z\otimes_A P^\centerdot:B^\sigma(-l)\otimes_A P^\centerdot\to B\otimes_A P^\centerdot\right).
$$

\begin{lemma}\label{minimal free resolution of Bk}
The morphism of cochain complexes $\varepsilon_B:F^\centerdot\to {_Bk}$ is a quasi-isomorphism. As a consequence, $F^\centerdot$ is a minimal free resolution of $_Bk$.
\end{lemma}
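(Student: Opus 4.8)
The plan is to exploit the flatness of $B$ as a right $A$-module in order to reduce the cohomology of $F^\centerdot$ to an elementary cokernel. Both $B$ and the twisted bimodule $B^\sigma(-l)$ are free, hence flat, as right $A$-modules, since $B=\bigoplus_{i\ge0}Az^i$ and the $\sigma$-twist alters only the right action. As each $P^n=A\otimes V_{-n}$ is $A$-free, applying $B\otimes_A-$ and $B^\sigma(-l)\otimes_A-$ turns $P^\centerdot$ into complexes of free left $B$-modules $Y^\centerdot:=B\otimes_AP^\centerdot$ and $X^\centerdot:=B^\sigma(-l)\otimes_AP^\centerdot$; the map under consideration is $f:=\rho_z\otimes_AP^\centerdot:X^\centerdot\to Y^\centerdot$, and $F^\centerdot=\Cone(f)$ is therefore a complex of free left $B$-modules.

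Next I would analyze $\rho_z$. Using the relation $za=\sigma(a)z$, one checks that $\rho_z$ is a homomorphism of graded $(B,A)$-bimodules fitting into a short exact sequence
$$
0\longrightarrow B^\sigma(-l)\xlongrightarrow{\ \rho_z\ }B\xlongrightarrow{\ \pi_A\ }A\longrightarrow 0
$$
of $(B,A)$-bimodules, where $A$ receives its $B$-action through $\pi_A$, so that $z$ acts as $0$. Since each $P^n$ is $A$-flat, tensoring this sequence over $A$ with $P^\centerdot$ preserves degreewise exactness, so $f$ is a degreewise monomorphism with cokernel $\Coker f=A\otimes_AP^\centerdot=P^\centerdot$, now viewed as a complex of left $B$-modules via $\pi_A$. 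For a degreewise-injective morphism of complexes the canonical projection $\Cone(f)\to\Coker f$ is a quasi-isomorphism, so $F^\centerdot$ has the same cohomology as $P^\centerdot$-via-$\pi_A$, namely $k$ concentrated in degree $0$ with $A$ acting through $\varepsilon_A$ and $z$ acting as $0$, i.e.\ ${}_Bk$. Moreover, under this projection $\varepsilon_B:F^0=B\to k$ factors as $\varepsilon_A\circ\pi_A$, which is exactly the augmentation inducing the isomorphism on $H^0$; hence $\varepsilon_B:F^\centerdot\to{}_Bk$ is a quasi-isomorphism.

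For the minimality claim I would check $\Image d_F\subseteq B_{\ge1}F^\centerdot$: the entries of the cone differential coming from the differentials of $X^\centerdot$ and $Y^\centerdot$ lie in $A_{\ge1}\subseteq B_{\ge1}$ by minimality of $P^\centerdot$, while the remaining component is $\rho_z$, i.e.\ right multiplication by $z$, and $z\in B_{\ge1}$ because $\deg z=l\ge1$. I expect the main difficulty to be bookkeeping rather than conceptual: one must verify that $\rho_z$ is genuinely $(B,A)$-bilinear (which is precisely where the twist $\sigma$ and the shift $(-l)$ are forced), correctly identify the cokernel bimodule $B/Bz\cong A$ with its $B$-module structure, and keep the signs and indices of the mapping cone consistent so that the projection to the cokernel and the augmentation $\varepsilon_B$ line up. The hypothesis $\deg z\ge1$ is the crucial input that secures minimality and distinguishes the construction from an ordinary tensor product.
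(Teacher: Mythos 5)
Your proof is correct and takes essentially the same route as the paper's: both rest on the exact sequence $0\to B^\sigma(-l)\xrightarrow{\rho_z}B\xrightarrow{\pi_A}A\to 0$ together with the freeness of $B$ and $B^\sigma(-l)$ as right $A$-modules, differing only in the order of the two reductions --- you first collapse $F^\centerdot=\Cone(\rho_z\otimes_AP^\centerdot)$ onto its degreewise cokernel $P^\centerdot$ (viewed as a $B$-complex via $\pi_A$) and then apply $\varepsilon_A$, whereas the paper first applies $\varepsilon_A$ termwise (tensored with $B^\sigma(-l)$ and $B$) and then collapses $\Cone(\rho_z\otimes_Ak)$ onto ${_Bk}$ via $\pi_A\otimes_Ak$, these being the two composites around the same commuting square of quasi-isomorphisms. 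If anything, your version is slightly more detailed, since your minimality check ($\Image d_F\subseteq B_{\geq1}F^\centerdot$, using $\Image d_P\subseteq A_{\geq1}\otimes V$ and $Bz\subseteq B_{\geq l}$ with $l\geq1$) spells out what the paper only sketches.
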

\begin{proof}
One obtains the morphism $\pi_A$ provides a quasi-isomorphism from the mapping cone $\Cone(\rho_z)$ of $B^\sigma(-l)\xlongrightarrow{\rho_z} B$ to $A$ as cochain complexes of graded $(B,A)$-bimodules.  Then $\pi_A\otimes_Ak:\Cone(\rho_z\otimes_Ak)= \Cone(\rho_z)\otimes_Ak\to A\otimes_Ak\cong {_Bk}$ is still a quasi-isomorphism. Now we have the following commutative diagram:
$$
\xymatrix{
B^\sigma(-l)\otimes_A P^\centerdot\ar[r]^(0.55){\rho_z\otimes_A P^\centerdot}\ar[d]^{B^\sigma(-l)\otimes_A\varepsilon_A}_{\simeq}
& B\otimes_A P^\centerdot\ar[r]\ar[d]^{B\otimes_A\varepsilon_A}_{\simeq}
&F^\centerdot\ar@{-->}[d]_{}^{\varphi}\ar[rrd]^{\varepsilon_B}\\
B^\sigma(-l)\otimes_Ak\ar[r]^(0.55){\rho_z\otimes_Ak}  &B\otimes_Ak\ar[r]& \Cone(\rho_z\otimes_Ak)\ar[rr]^{\pi_A\otimes_Ak}&&{_Bk},
}
$$
where $\varphi:=\left(B^\sigma(-l)\otimes_A\varepsilon_A\right)[1]\oplus (B\otimes_A\varepsilon_A)$ is also a quasi-isomorphism. So $\varepsilon_B$ is a quasi-isomorphism. The minimality of $F^\centerdot$ comes from the minimality of $P^\centerdot$ and the action of $\rho_z$.
\end{proof}

In the sequel, we use the minimal free resolution $F^\centerdot$  of $_Bk$ constructed above to bridge the relations among Ext-algebras $E(k[z]), E(A)$ and $E(B)$. Recall that
$$
E(A)\cong \bigoplus_{i\geq0}\uHom_A(P^{-i},k)\cong \bigoplus_{i\geq0} V_i^*,
$$
where  $V_0=k$. By Lemma \ref{minimal free resolution of Bk}, we have $E^0(B)=V_0^*$, and
$$
E^j(B)\cong\uHom_B(F^{-j},k)\cong \uHom_B(B^\sigma(-l)\otimes_AP^{-j+1},k)\oplus\uHom_B(B\otimes_AP^{-j},k)\cong V_{j-1}^*(l)\oplus V_j^*,
$$
for $j\geq1$ as graded vector spaces. We will identify $E^i(A)$ with $V_i^*$ and $E^j(B)$ with $V_{j-1}^*(l)\oplus V_j^*$ for each $i\geq 0,j\geq 1$ in the sequel.

For the polynomial algebra $k[z]$, there is a canonical minimal free resolution of ${_{k[z]}k}$ :
$$
\Cone(\rho'_z):=(0\to k[z](-l)\xlongrightarrow{\rho'_z}k[z]\to 0)\quad \xlongrightarrow{\varepsilon_{k[z]}} \quad {_{k[z]}k},
$$
where $\rho_z'$ is the $k[z]$-module homomorphism of multiplication by $z$. Then
$$E^1(k[z])\cong\uHom_{k[z]}(k[z](-l),k)=\Hom_{\Gr k[z]}(k[z](-l),k(-l))\cong k^*(l)$$
is a one dimensional graded vector space concentrated in degree $-l$, and identified with $k^*(l)$. We fix $\xi$ to be the canonical basis of $E^1(k[z])$ corresponding to the identity map in $k^*(l)$. Note that $V_0=k$.

\begin{remark}\label{a note for E(k[z]), E(A) and E(B)}
In order to make notations uniform, we consider Ext-algebras $E(k[z])$, $E(A)$ and $E(B)$ as graded $k$-dual spaces of some graded vector spaces. However, we need to keep in mind that each element in such identifications for Ext-algebras also has a corresponding representation through minimal free resolutions.

For any homogeneous element $f\in (V^*_i)_{-t}=E^i(A)_{-t}$, there is a corresponding morphism  $\alpha:=P^{\centerdot}\to k(-t)[i]$ in $\Ch(\Gr A)$ induced by $\varepsilon_A\otimes f: A\otimes V_i\to {_Ak}(-t)$.

In other side, for any $g\in (V^*_{i-1}(l))_{-t-l}\subseteq E^i(B)_{-t-l}$, there is a corresponding morphism  $\beta:=F^{\centerdot}\to k(-t-l)[i]$ in $\Ch(\Gr B)$ induced by $B^\sigma(-l)\otimes_A(A\otimes V_{i-1})\xlongrightarrow{\pi_z\otimes_A(A\otimes g)}k[z](-l-t)\xlongrightarrow{\varepsilon_{k[z]}} {_Bk}(-t-l)$. Such representations are useful to prove results below.
\end{remark}



\begin{lemma}\label{The injection E(pi_z)}
$E(\pi_z)(\xi)$ is the identity map of $V_0^*(l)\subseteq E^1(B)$.
\end{lemma}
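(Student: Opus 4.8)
The plan is to evaluate $E(\pi_z)(\xi)$ directly from the functorial recipe in the proof of Theorem~\ref{E(-) is a functor}: with the minimal free resolution $F^\centerdot$ of $_Bk$ from Lemma~\ref{minimal free resolution of Bk} and the resolution $\Cone(\rho'_z)$ of $_{k[z]}k$, one has $E(\pi_z)(\gamma)=\gamma\circ\widetilde{\pi_z}$ for every $\gamma\in E(k[z])$, where $\widetilde{\pi_z}:F^\centerdot\to\Cone(\rho'_z)$ is any morphism in $\Ch(\Gr B)$ lifting $\pi_z$ with $\varepsilon_{k[z]}\circ\widetilde{\pi_z}=\varepsilon_B$. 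Since $\uHom_B(F^\centerdot,k)$ has zero differential by the minimality asserted in Lemma~\ref{minimal free resolution of Bk}, the class $E(\pi_z)(\xi)$ is literally the cochain $\xi\circ\widetilde{\pi_z}$, so the whole problem reduces to producing one such lift and reading off its degree $1$ part.

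The key observation is that the quasi-isomorphism $\varphi$ already constructed in the proof of Lemma~\ref{minimal free resolution of Bk} serves as the lift $\widetilde{\pi_z}$. Letting $A$ act on $k$ through $\varepsilon_A$, there are canonical isomorphisms of graded left $B$-modules $B\otimes_A k\cong k[z]$ and $B^\sigma(-l)\otimes_A k\cong k[z](-l)$ (the $B$-action being the one induced by $\pi_z$), under which $\rho_z\otimes_A k$ becomes $\rho'_z$ and thus $\Cone(\rho_z\otimes_A k)\cong\Cone(\rho'_z)$; moreover the map $\pi_A\otimes_A k$ appearing in Lemma~\ref{minimal free resolution of Bk} corresponds to $\varepsilon_{k[z]}$. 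I would then note that $\varphi=(B^\sigma(-l)\otimes_A\varepsilon_A)[1]\oplus(B\otimes_A\varepsilon_A)$ is $B$-linear and restricts in complex degree $0$ to $B\otimes_A\varepsilon_A:B\to k[z]$, which is exactly $\pi_z$; together with $\varepsilon_{k[z]}\circ\varphi=(\pi_A\otimes_A k)\circ\varphi=\varepsilon_B$ this shows $\varphi$ is an admissible lift, so we may take $\widetilde{\pi_z}=\varphi$.

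With this lift in hand I would compute $\xi\circ\varphi$ in complex degree $-1$. Writing $F^{-1}=B^\sigma(-l)\oplus(B\otimes V_1)$ and $\Cone(\rho'_z)^{-1}=k[z](-l)$, the explicit form of $\varphi$ gives $\varphi^{-1}=(\pi_z,0):B^\sigma(-l)\oplus(B\otimes V_1)\to k[z](-l)$, since the contribution of the summand $B\otimes V_1$ would target $(k[z])^{-1}=0$. Since $\xi$ corresponds to the identity of $\uHom_{k[z]}(k[z](-l),k)\cong k^*(l)$, its degree $-1$ component $\xi^{-1}:k[z](-l)\to k(-l)$ carries the generator to $1$ and annihilates $z\,k[z](-l)$. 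Composing, $\xi\circ\varphi:F^{-1}\to k$ sends the generator of $B^\sigma(-l)$ to $1$ and kills $B\otimes V_1$.

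Finally I would match this cochain with the identity of $V_0^*(l)$. By Remark~\ref{a note for E(k[z]), E(A) and E(B)}, with $i=1$, $g=\id_{V_0}$ and $t=0$ (recall $V_0=k$), the element $\id_{V_0^*(l)}\in V_0^*(l)\subseteq E^1(B)$ is represented by exactly the composite $B^\sigma(-l)\otimes_A(A\otimes V_0)\xrightarrow{\pi_z\otimes_A(A\otimes\id)}k[z](-l)\xrightarrow{\varepsilon_{k[z]}}{_Bk}$, which is the very map $\xi\circ\varphi$ found above; hence $E(\pi_z)(\xi)=\id_{V_0^*(l)}$. I expect the only delicate point to be the bookkeeping in the middle two paragraphs: correctly tracking the shift $(-l)$ and the $\sigma$-twist in the identification $B^\sigma(-l)\otimes_A k\cong k[z](-l)$, and checking the mapping-cone indices so that $\varphi^{-1}$ really has vanishing second component. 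Once these are pinned down the identification of the two cochains is immediate, so this is a mild rather than a serious obstacle.
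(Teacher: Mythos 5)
Your proposal is correct and follows essentially the same route as the paper: both produce an explicit lift of $\pi_z$ from $F^\centerdot$ to $\Cone(\rho'_z)$, observe that minimality makes the cochain $\xi\circ\widetilde{\pi_z}$ literally the class $E(\pi_z)(\xi)$, read off the only nonzero component $B^\sigma(-l)\otimes_A A\xrightarrow{\pi_z\otimes_A A}k[z](-l)\xrightarrow{\varepsilon_{k[z]}}k(-l)$, and match it with $\id_{V_0^*(l)}$ via Remark~\ref{a note for E(k[z]), E(A) and E(B)}. The only difference is cosmetic: the paper factors the lift in two stages through $k[z]\otimes_A P^\centerdot$ (via $\pi_z\otimes_A P^\centerdot$ followed by $\mathrm{pr}_0$), whereas you reuse the quasi-isomorphism $\varphi=(B^\sigma(-l)\otimes_A\varepsilon_A)[1]\oplus(B\otimes_A\varepsilon_A)$ from Lemma~\ref{minimal free resolution of Bk} under the identification $\Cone(\rho_z\otimes_A k)\cong\Cone(\rho'_z)$ -- and these two lifts coincide.
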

\begin{proof}
The composition of graded algebra homomorphisms $A\xlongrightarrow{\iota_A} B\xlongrightarrow{\pi_z}k[z]$ makes $k[z]$ a graded $(B,A)$-module, and $k[z]^\sigma\cong k[z]$ as graded $(B,A)$-modules. Let $\mathrm{pr}_0:k[z]\otimes_AP^\centerdot\to k[z]\otimes_A A\cong k[z]$ be the projection to the $0$-th term of the cochain complex $k[z]\otimes_AP^\centerdot$, which is a morphism in $\Ch(\Gr B)$. Then we have a commutative diagram in $\Ch(\Gr B)$ as below
$$
\xymatrix{
B^\sigma(-l)\otimes_AP^\centerdot\ar[r]^(0.55){\rho_z\otimes P^\centerdot}\ar[d]^{\pi_z\otimes_A P^\centerdot}&
B\otimes_A P^\centerdot\ar[d]^{\pi_z\otimes_AP^\centerdot}\ar[r]&
F^\centerdot\ar@{-->}[d]^{\widetilde{\pi_z}}\ar[rdd]^{\varepsilon_B}\\
k[z](-l)\otimes_AP^\centerdot\ar[r]^(0.55){\rho_z\otimes_A P^\centerdot}\ar[d]^{\mathrm{pr}_0}&
k[z]\otimes_AP^\centerdot\ar[r]\ar[d]^{\mathrm{pr}_0}&
\Cone(\rho_z\otimes_AP^\centerdot)\ar@{-->}[d]^{\varphi}\\
k[z](-l)\ar[r]^{\rho'_z}&k[z]\ar[r]&\Cone(\rho'_z)\ar[r]^{\varepsilon_{k[z]}}& {_Bk},
}
$$
where $\widetilde{\pi}_z=(\pi_z\otimes_AP^\centerdot)[1]\oplus (\pi_z\otimes_AP^\centerdot)$, and $\varphi=\mathrm{pr}_0[1]\oplus\mathrm{pr}_0$.

The basis $\xi$ of $E^1(k[z])$ is also related to the morphism $\beta:\Cone(\rho_z')\to k(-l)[1]$ whose $(-1)$-th component is just $\varepsilon_{k[z]}:k[z](-l)\to k(-l)$, by Remark \ref{a note for E(k[z]), E(A) and E(B)}. By Theorem \ref{E(-) is a functor}, one has
$$E(\pi_z)(\beta)=\beta\circ\varphi\circ\widetilde{\pi}_z.$$
However, the nonzero values of $(-1)$-th component of $\varphi$ all takes from $k[z](-l)\otimes_AA$. Hence, the only nonzero part of $E(\pi_z)(\beta)$ equals the composition of following morphisms
$$
\phi:B^{\sigma}(-l)\otimes_AA\xlongrightarrow{\pi_z\otimes_A A}k[z](-l)\otimes_AA\cong k[z](-l)\xlongrightarrow{\varepsilon_{k[z]}} k(-l).
$$
When identifying $E^1(B)$ as $V_0^*(l)\otimes V_1^*$, $\phi$ corresponds to the identity map of $V_0^*(l)$. The result follows.
\end{proof}

\begin{lemma}\label{The injection E(pi_A)}
For any $f\in V_i^*=E^i(A)$, then $E(\pi_A)(f)=f\in V_i^*\subseteq E^i(B)$.
\end{lemma}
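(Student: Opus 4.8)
The plan is to follow the template of Lemma~\ref{The injection E(pi_z)}, transporting Ext-classes along $\pi_A$ instead of $\pi_z$. By Theorem~\ref{E(-) is a functor}, computing $E(\pi_A)$ amounts to choosing an explicit lift of $\pi_A$ to the minimal free resolutions: a morphism $\widetilde{\pi_A}\colon F^\centerdot\to P^\centerdot$ in $\Ch(\Gr B)$, where $P^\centerdot$ is viewed as a complex of graded $B$-modules through $\pi_A$, subject to $\varepsilon_A\circ\widetilde{\pi_A}=\varepsilon_B$. One then has $E(\pi_A)(f)=f\circ\widetilde{\pi_A}$, and the whole statement reduces to evaluating this composite and reading it off against the identification $E^i(B)\cong V_{i-1}^*(l)\oplus V_i^*$ recorded after Lemma~\ref{minimal free resolution of Bk}.

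Next I would write down the candidate lift. Regarding $A$ as a graded $(B,A)$-bimodule via $\pi_A$, the map $\pi_A\colon B\to A$ is a homomorphism of graded $(B,A)$-bimodules, so it induces a chain map $\pi_A\otimes_A P^\centerdot\colon B\otimes_A P^\centerdot\to A\otimes_A P^\centerdot\cong P^\centerdot$ in $\Ch(\Gr B)$. I define $\widetilde{\pi_A}$ to be the composite of the canonical projection $F^\centerdot=\Cone(\rho_z\otimes_A P^\centerdot)\twoheadrightarrow B\otimes_A P^\centerdot$ onto the second summand with $\pi_A\otimes_A P^\centerdot$; thus $\widetilde{\pi_A}$ annihilates $B^\sigma(-l)\otimes_A P^\centerdot$ and acts as $\pi_A\otimes_A P^\centerdot$ on $B\otimes_A P^\centerdot$.

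Then I would check the two requirements. In degree $0$ one has $F^0=B$, $P^0=A$ and $\widetilde{\pi_A}^0=\pi_A$, so $\varepsilon_A\circ\widetilde{\pi_A}=\varepsilon_B$ holds because $\varepsilon_B$ kills the positive-degree variable $z$. The only structural point is that $\widetilde{\pi_A}$ is a chain map: applying the cone differential to $(x,y)\in F^n$ and projecting produces, besides the term handled by the naturality of $\pi_A\otimes_A P^\centerdot$, the contribution $(\pi_A\otimes_A P^\centerdot)(\rho_z\otimes_A P^\centerdot)(x)$, which vanishes since $\pi_A\circ\rho_z=0$ (indeed $\pi_A(bz)=0$ as $bz$ has no constant term). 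I expect this to be the only place requiring genuine care, namely keeping the mapping-cone signs and indices straight while confirming that the $\rho_z$-component is exactly what $\pi_A$ destroys.

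Finally I would compute $E(\pi_A)(f)=f\circ\widetilde{\pi_A}$ for $f\in V_i^*=E^i(A)$. Since $\widetilde{\pi_A}$ kills the summand $B^\sigma(-l)\otimes_A P^{-i+1}$ of $F^{-i}$, the class $E(\pi_A)(f)$ has zero component in $V_{i-1}^*(l)$. On $B\otimes_A P^{-i}=B\otimes_A(A\otimes V_i)$, writing $f$ as $\varepsilon_A\otimes f$ in the manner of Remark~\ref{a note for E(k[z]), E(A) and E(B)}, I obtain $b\otimes(a\otimes v)\mapsto\varepsilon_A\!\big(\pi_A(b)a\big)f(v)=\varepsilon_B(b)\,\varepsilon_A(a)\,f(v)$, using $\varepsilon_A\circ\pi_A=\varepsilon_B$. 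Under the identification $B\otimes_A P^{-i}\cong B\otimes V_i$ this is precisely the cochain representing $f\in V_i^*\subseteq E^i(B)$, which yields $E(\pi_A)(f)=f$ and completes the argument.
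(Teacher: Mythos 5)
Your proof is correct and takes essentially the same approach as the paper: your lift $\widetilde{\pi_A}$ is exactly the paper's map $(0,\pi_A\otimes_A P^\centerdot):F^\centerdot\to P^\centerdot$, with the chain-map condition resting on the same observation $\pi_A\circ\rho_z=0$ (the paper encodes this as commutativity of a diagram of mapping cones), followed by the same read-off against $E^i(B)\cong V_{i-1}^*(l)\oplus V_i^*$. You merely spell out the verification that the paper delegates to ``similar to the proof of Lemma \ref{The injection E(pi_z)}.''
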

\begin{proof}
Consider the following commutative diagram in $\Ch (\Gr B)$:
$$
\xymatrix{
B^\sigma(-l)\otimes_A P^\centerdot\ar[r]^(0.55){\rho_z\otimes_A P^\centerdot}\ar[d]& B\otimes_A P^\centerdot\ar[r]\ar[d]_{\pi_A\otimes_AP^\centerdot}&F^\centerdot\ar@{-->}[d]_{(0,\pi_A\otimes_AP^\centerdot)}\ar[rd]^{\varepsilon_B}\\
0 \ar[r]  &P^\centerdot\ar[r]& P^\centerdot\ar[r]^{\varepsilon_A}&{_Bk}.
}
$$
Similar to the proof of Lemma \ref{The injection E(pi_z)}, the result follows.
\end{proof}


It is the turn to see the Yoneda product of images of $E(A)$ and $E(k[z])$ in $E(B)$.
\begin{lemma}\label{the Yoneda product in E(B)}
Let $f\in (V^*_i)_{-t}=E^i(A)_{-t}$ for $i,t\geq0$.
\begin{enumerate}
\item Then
$$
E(\pi_z)(\xi)\cdot E(\pi_A)(f)=(-1)^if\in \left(V_i^*(l)\right)_{-t-l}\subseteq E^{i+1}(B)_{-t-l}.
$$
\item There exists a bigraded algebra automorphism $\tau$ of $E(A)$ such that
$$
E(\pi_A)(f)\cdot E(\pi_z)(\xi)=\tau(f) \in \left(V_{i}^*(l)\right)_{-t-l}\subseteq E^{i+1}(B)_{-t-l}.
$$
\end{enumerate}
\end{lemma}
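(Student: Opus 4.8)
The plan is to compute both Yoneda products at the cochain level on the minimal free resolution $F^\centerdot$ of ${_Bk}$ produced in Lemma \ref{minimal free resolution of Bk}, using the explicit representatives recorded in Remark \ref{a note for E(k[z]), E(A) and E(B)} together with the descriptions of $E(\pi_z)(\xi)$ and $E(\pi_A)(f)$ supplied by Lemmas \ref{The injection E(pi_z)} and \ref{The injection E(pi_A)}. Recall that the Yoneda product $u\cdot v$ is computed by first lifting the right factor $v$ to a cochain endomorphism of $F^\centerdot$ (up to the appropriate shift), and then post-composing with the left factor shifted by the cohomological degree of $v$. Since $F^\centerdot$ is minimal, only the leading ($B_0=k$) components of these lifts survive after pairing with $\varepsilon_B$, so throughout I only need to pin down those leading components, and all the summands supported on $B_{\geq 1}$ may be ignored.

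For part (1) the right factor is $E(\pi_A)(f)=f$, represented on the $B\otimes_AP^{-i}=B\otimes V_i$ summand of $F^{-i}$ by $\varepsilon_B\otimes f$. Let $\widetilde{\alpha}\colon P^\centerdot\to P(-t)[i]^\centerdot$ be the lift over $A$ of the morphism $\alpha$ attached to $f$ in Remark \ref{a note for E(k[z]), E(A) and E(B)}. Applying the functors $B^\sigma(-l)\otimes_A-$ and $B\otimes_A-$ to $\widetilde{\alpha}$ gives two cochain maps that commute with $\rho_z\otimes_AP^\centerdot$ (the multiplication $\rho_z$ acts in the $B$-factor while $\widetilde{\alpha}$ acts in the $P$-factor), so their mapping cone is a cochain endomorphism $\widetilde{f}\colon F^\centerdot\to F(-t)[i]^\centerdot$ lifting $f$. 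Post-composing with $E(\pi_z)(\xi)[i]$, which by Lemma \ref{The injection E(pi_z)} reads off the $B^\sigma(-l)\otimes_AP$ summand through $\varepsilon_B$, selects exactly $f$ on the $V_i^*(l)$ slot of $E^{i+1}(B)$ and annihilates the $V_{i+1}^*$ slot; the sign $(-1)^i$ is produced by the shift functor $[i]$ (equivalently, by the sign $-d_X$ in the $X$-summand of the cone differential).

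For part (2) the right factor is $E(\pi_z)(\xi)$, so it is this element that must be lifted, and this is the heart of the matter. The point is that $E(\pi_z)(\xi)$ is represented by right multiplication by $z$, which on $F^\centerdot=\Cone(\rho_z\otimes_AP^\centerdot)$ is realized by a cochain map $\widetilde{\beta}\colon F^\centerdot\to F(-l)[1]^\centerdot$ whose leading component carries the summand $B^\sigma(-l)\otimes_AP^{-j+1}$ identically onto the summand $B\otimes_AP^{-j+1}(-l)$ of the shifted complex. Because these two summands carry right $A$-actions differing by $\sigma$, expressing this comparison map in the fixed bases of $V_\centerdot$ forces $\sigma$ to act on the generating spaces $V_\centerdot$, hence on $E(A)=\bigoplus_i V_i^*$. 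Post-composing with $f[1]$, which is supported on the $B\otimes V_i$ summand via $\varepsilon_B\otimes f$, then yields $f$ precomposed with this induced twist, i.e.\ $\tau(f)$, again landing in the $V_i^*(l)$ slot of $E^{i+1}(B)$. The automorphism $\tau$ is identified as the one induced by $\sigma$ through the contravariant functor $E(-)$ of Theorem \ref{E(-) is a functor}: since $\sigma$ is a graded algebra automorphism of $A$, the map $E(\sigma)$ is a bigraded algebra automorphism of $E(A)$ with $E(\sigma^{-1})=E(\sigma)^{-1}$, and $\tau$ coincides with it up to the convention $\sigma\leftrightarrow\sigma^{-1}$; in particular $\tau$ is a bigraded algebra automorphism as required.

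The main obstacle is the construction in part (2): producing the explicit lift $\widetilde{\beta}$ and verifying that it is a genuine cochain map compatible with the cone differential of $F^\centerdot$. This is precisely the step where the twist $B^\sigma(-l)$ enters and manufactures $\tau$, so it demands care in tracking how $\sigma$ propagates through the comparison of the two summands. A secondary, purely bookkeeping obstacle is the correct handling of the signs introduced by the shift functors $[i]$ and $[1]$ and by the cone differential; these signs are what make the two products asymmetric, accounting for the explicit factor $(-1)^i$ in (1) against its absorption into the automorphism $\tau$ in (2).
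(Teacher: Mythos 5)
Your proposal is correct and follows essentially the same route as the paper: fix the representatives of Remark \ref{a note for E(k[z]), E(A) and E(B)}, lift the right Yoneda factor to a cochain endomorphism of $F^\centerdot$ (the signed cone of $B^\sigma(-l)\otimes_A\widetilde{\alpha}$ and $B\otimes_A\widetilde{\alpha}$ for part (1), and the projection--untwist--comparison--inclusion map through $(B^\sigma(-l)\otimes_AP^\centerdot)[1]\cong(B(-l)\otimes_A{^{\sigma^{-1}}P^\centerdot})[1]$ for part (2)), and read off the surviving leading component by minimality, with the twist $\widetilde{\sigma}:{^{\sigma^{-1}}P^\centerdot}\to P^\centerdot$ manufacturing $\tau$ exactly as in the paper's construction of $\theta$. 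The only (harmless) deviation is that you justify multiplicativity of $\tau$ by identifying it with the functorial automorphism $E(\sigma^{\pm1})$ from Theorem \ref{E(-) is a functor}, whereas the paper argues via the conjugation isomorphism $\Psi$ of the differential bigraded algebra $\mathrm{\underline{End}}_A(P^\centerdot)$; the two arguments are equivalent.
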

\begin{proof}
First of all, we fix corresponding representations along Remark \ref{a note for E(k[z]), E(A) and E(B)}. Write $\alpha:P^\centerdot\to {_Ak}(-t)[i]$ as the corresponding element of $f$ and $\beta:\Cone(\rho_z')\to {_{k[z]}k}(-l)[1]$ as the one of $\xi$ . By Lemma \ref{The injection E(pi_A)}, the image $E(\pi_A)(f)$ in $E^i(B)$ is still an element $f$ in $(V_i^*)_{-t}$. So the morphism $E(\pi_A)(\alpha):F^\centerdot\to {_Bk}(-t)[i]$ in $\Ch(\Gr B)$ induced by
$$
\pi_A \otimes_A(\varepsilon_A\otimes f): B\otimes_A (A\otimes V_i)\to {_Bk}(-t).
$$
By the proof of Lemma \ref{The injection E(pi_z)}, we have the morphism $E(\pi_z)(\beta):F^\centerdot\to {_Bk}(-l)[1]$ in $\Ch(\Gr B)$ is induced by
$$
B^{\sigma}(-l)\otimes_AA\xlongrightarrow{\pi_z\otimes_A A} k[z](-l)\xlongrightarrow{\varepsilon_{k[z]}} {_Bk}(-l).
$$

(a) By Comparison Theorem and the minimality of $P^\centerdot$, there exists a morphism $\widetilde{\alpha}:P^\centerdot\to P^\centerdot(-t)[i]$ such that $\varepsilon_A[i]\circ \widetilde{\alpha}=\alpha$. Note that the $(-i)$-th component of $\widetilde{\alpha}:A\otimes V_i\to A(-t)$ is just $A\otimes f$.

The Yoneda product of $E(\pi_z)(\xi)$ and $E(\pi_A)(f)$ can be read off from the following diagram
$$
\xymatrix{
B^\sigma(-l)\otimes_A P^\centerdot\ar[rr]^(0.55){\rho_z\otimes_A P^\centerdot}\ar[d]^{(-1)^iB^\sigma(-l)\otimes_A \widetilde{\alpha}}
&& B\otimes_A P^\centerdot\ar[r]
\ar[d]^{B\otimes_A \widetilde{\alpha}}
&F^\centerdot\ar@{-->}[d]^{\phi}
\ar[rd]^{E(\pi_A)\left(\alpha\right)}\\
(B^\sigma(-l)\otimes_A P^\centerdot(-t))[i]\ar[rr]^(0.55){\psi}
&&(B\otimes_A P^\centerdot(-t))[i]\ar[r]
&F^\centerdot(-t)[i]\ar[d]^{E(\pi_z)(\beta)[i]}\ar[r]&{_Bk}(-t)[i]
\\
&&&{_Bk}(-t-l)[i+1],&
}
$$
where $\psi:=(-1)^i(\rho_z\otimes_A P^\centerdot(-t))[i]$ and $\phi:=(-1)^i(B^\sigma(-l)\otimes_A \widetilde{\alpha})[1]\oplus (B\otimes_A \widetilde{\alpha})$. Then
$
E(\pi_z)(\beta)\cdot E(\pi_A)(\alpha)=E(\pi_z)(\beta)[i]\circ \phi, $ and the nonezero part equals the following composition
$$
B^\sigma(-l)\otimes_A (A\otimes V_i)\xlongrightarrow{(-1)^iB^{\sigma}(-l)\otimes_A(A\otimes f)}B^\sigma(-l)\otimes_A A(-t)\xlongrightarrow{\pi_z\otimes_A A} k[z](-t-l)\xlongrightarrow{\varepsilon_{k[z]}}{_Bk}(-t-l).$$
Thus, $E(\pi_z)(\xi)\cdot E(\pi_A)\left(f\right)=(-1)^if\in \left(V_i^*(l)\right)_{-t-l}\subset E^{i+1}(B)_{-t-l}$  by Remark \ref{a note for E(k[z]), E(A) and E(B)}.

(b) By Comparison Theorem and the minimality of $P^\centerdot$, there exists a cochain complex isomorphism $
\widetilde{\sigma}:{^{\sigma^{-1}}P^\centerdot}\to P^\centerdot$ in $\Ch (\Gr A)$. Then $\widetilde{\sigma}$ induces a differential bigraded algebra isomorphism $\Psi:\mathrm{\underline{End}}(P^\centerdot)\to \mathrm{\underline{End}}({^{\sigma^{-1}}P^\centerdot})$, and $H\Psi$ is a bigraded algebra automorphism of $E(A)$. Since we identify $E(A)$ with  $\oplus_{i\geq0}V_i^*$, we write $\tau$ to be the automorphism of $\oplus_{i\geq0}V_i^*$.

To be specific, one has the following commutative diagram
$$
\xymatrix{
\mathrm{\underline{End}}_A(P^\centerdot)\ar[rr]^{\uHom_A(P^\centerdot,\varepsilon_A)}\ar[d]^{\Psi} &&\uHom_A(P^\centerdot,k)\ar[d]^{\uHom_A(\widetilde{\sigma},k)}\\
\mathrm{\underline{End}}_A({^{\sigma^{-1}}P^\centerdot})\ar[rr]^(0.47){\uHom_A({^{\sigma^{-1}}P^\centerdot},\varepsilon_A)}&& \uHom_A(^{\sigma^{-1}}P^\centerdot,k).
}
$$
So for any $h\in (V^*_j)_{-s}$, $\tau(h)$ corresponds to a morphism ${^{\sigma^{-1}}P^\centerdot}\to k[j](-s)$ formed by
$$
{^{\sigma^{-1}}(A\otimes V_j)}\xlongrightarrow{\widetilde{\sigma}^{-j}}A\otimes V_j\xlongrightarrow{\varepsilon_A\otimes h}k(-s).
$$

Using different constructions of mapping cones, we have
$$
\xymatrix{
0\ar[r]\ar[d]           &B\otimes_AP^\centerdot\ar[r]\ar@{=}[d]           &B\otimes_A P^\centerdot\ar[d]^{\iota}      \\
B^\sigma(-l)\otimes_AP^\centerdot\ar[r]\ar@{=}[d]&B\otimes_AP^\centerdot\ar[r]\ar[d]&F^\centerdot\ar[d]^{\pi} \\
B^\sigma(-l)\otimes_AP^\centerdot\ar[r]   &0\ar[r]       &(B^\sigma(-l)\otimes_AP^\centerdot)[1].
}
$$

Define a cochain morphism in $\Ch(\Gr B)$ as follows
$$
\theta:F^\centerdot\xlongrightarrow{\pi}(B^\sigma(-l)\otimes_AP^\centerdot)[1]\xlongrightarrow{\cong}(B(-l)\otimes_A{^{\sigma^{-1}} P}^\centerdot)[1]\xlongrightarrow{B(-l)\otimes_A\widetilde{\sigma}}(B(-l)\otimes_AP^\centerdot)[1]\xlongrightarrow{\iota}F^\centerdot(-l)[1],
$$
and it satisfies $E(\pi_z)(\beta)=\varepsilon_{k[z]}[1]\circ\theta$. Then $
E(\pi_A)(\alpha)\cdot E(\pi_z)(\beta)=E(\pi_A)(\alpha)[1]\circ \theta$, and the nonzero part is
$$
B^\sigma(-l)\otimes_A(A\otimes V_i)\cong B(-l)\otimes_A {^{\sigma^{-1}}(A\otimes V_i)}\xlongrightarrow{B(-l)\otimes_A\widetilde{\sigma}^{-i}}B(-l)\otimes_A(A\otimes V_i)\xlongrightarrow{\pi_A\otimes_A(\varepsilon_A\otimes f)} {_Bk}(-t-l).
$$
So  $E(\pi_A)(f)\cdot E(\pi_z)(\xi)=\tau(f) \in \left(V_{i}^*(l)\right)_{-t-l}\subseteq E^{i+1}(B)_{-t-l}$ by Remark \ref{a note for E(k[z]), E(A) and E(B)} and the correspondence of $\tau$.
\end{proof}

Now we can prove the main result.

\begin{theorem}\label{Ext algebra of Ore extension is an R-smash product}
Let $A$ be a connected graded algebra, and $\sigma$ be an automorphism of $A$. Let $B=A[z;\sigma]$ be a graded skew extension of $A$. Write $E(A)$, $E(k[z])$ and $E(B)$ to be Ext-algebras of $A$, $k[z]$ and $B$, respectively. Then as bigraded algebras
$$
E(B)\cong E(k[z])\#_{R_E}E(A)\cong E(A)\#_{R_{E}^{-1}}E(k[z]),
$$
for some $k$-linear normal isomorphism $R_E: E(A)\otimes E(k[z])\to E(k[z])\otimes E(A)$  satisfying 
$$
R_E(f\otimes g)=(-1)^ig\otimes \tau(f).
$$
where  $f\in E^{i}(A)$, $g\in E^1(k[z])$, and $\tau$ is the bigraded automorphism of $E(A)$ in Lemma \ref{the Yoneda product in E(B)}(b).
\end{theorem}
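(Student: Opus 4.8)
The plan is to deduce the statement from Theorem~\ref{equivalent conditions of R-smash product} applied to the bigraded algebra $C=E(B)$, taking as the two structural homomorphisms the injective bigraded algebra maps $E(\pi_z)\colon E(k[z])\to E(B)$ and $E(\pi_A)\colon E(A)\to E(B)$ furnished by Corollary~\ref{E(A) and E(k[z]) are subalgebras fo E(B)}. With $E(k[z])$ as the left factor and $E(A)$ as the right factor, condition (2) of that theorem reduces everything to showing that the multiplication map
$$
\Phi:=m_{E(B)}\circ\big(E(\pi_z)\otimes E(\pi_A)\big)\colon E(k[z])\otimes E(A)\to E(B)
$$
is an isomorphism of bigraded vector spaces. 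Granting this, Theorem~\ref{equivalent conditions of R-smash product} yields $E(B)\cong E(k[z])\#_{R_E}E(A)$ with $R_E=\Phi^{-1}\circ m_{E(B)}\circ(E(\pi_A)\otimes E(\pi_z))$, and the normality of $R_E$ is then automatic.

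To check that $\Phi$ is bijective I would first record the homogeneous pieces: since $k[z]$ has global dimension one, $E(k[z])=E^0(k[z])\oplus E^1(k[z])=k\cdot 1\oplus k\cdot\xi$; meanwhile $E(A)=\bigoplus_{i\ge 0}V_i^*$, and by Lemma~\ref{minimal free resolution of Bk} one has $E^0(B)=V_0^*$ and $E^j(B)\cong V_{j-1}^*(l)\oplus V_j^*$ for $j\ge 1$. Evaluating $\Phi$ on the two obvious families of generators, Lemma~\ref{The injection E(pi_A)} gives $\Phi(1\otimes f)=f\in V_i^*\subseteq E^i(B)$ for $f\in V_i^*$, while Lemmas~\ref{The injection E(pi_z)} and~\ref{the Yoneda product in E(B)}(a) give $\Phi(\xi\otimes f)=E(\pi_z)(\xi)\cdot E(\pi_A)(f)=(-1)^if\in V_i^*(l)\subseteq E^{i+1}(B)$. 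Comparing summands degree by degree, the $V_j^*$ part of $E^j(B)$ is hit bijectively by $1\otimes V_j^*$ and the $V_{j-1}^*(l)$ part by $\xi\otimes V_{j-1}^*$ (the scalar $(-1)^{j-1}$ being invertible), so $\Phi$ is a bigraded isomorphism.

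For the explicit shape of $R_E$ I would evaluate $R_E=\Phi^{-1}\circ m_{E(B)}\circ(E(\pi_A)\otimes E(\pi_z))$ on $f\otimes\xi$ with $f\in E^i(A)=V_i^*$. By Lemma~\ref{the Yoneda product in E(B)}(b), $m_{E(B)}\big(E(\pi_A)(f)\otimes E(\pi_z)(\xi)\big)=\tau(f)\in V_i^*(l)\subseteq E^{i+1}(B)$; since $\Phi(\xi\otimes h)=(-1)^ih$ for $h\in V_i^*$, inverting gives $\Phi^{-1}(\tau(f))=(-1)^i\,\xi\otimes\tau(f)$. Hence $R_E(f\otimes g)=(-1)^i\,g\otimes\tau(f)$ for $g\in E^1(k[z])$, and the normal condition fixes the remaining values on $E^0(k[z])$, matching the stated formula.

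Finally, the second decomposition $E(B)\cong E(A)\#_{R_E^{-1}}E(k[z])$ I would obtain by applying Theorem~\ref{equivalent conditions of R-smash product} again with the two homomorphisms interchanged; the same degree count, now using that $\tau$ is an automorphism, shows $\Psi:=m_{E(B)}\circ(E(\pi_A)\otimes E(\pi_z))$ is also a bigraded isomorphism, giving $E(B)\cong E(A)\#_{R'}E(k[z])$ with $R'=\Psi^{-1}\circ\Phi$. Since $R_E=\Phi^{-1}\circ\Psi$, one has $R'\circ R_E=\id$, so $R'=R_E^{-1}$. I expect the substantive work to be entirely contained in Lemmas~\ref{The injection E(pi_z)}--\ref{the Yoneda product in E(B)}, which have already absorbed the homological input; the genuine hazards here are purely bookkeeping ones, namely tracking the sign $(-1)^i$ through the inversion of $\Phi$ and confirming that the swapped-role application really returns $R_E^{-1}$ rather than a merely superficially inverse-looking map.
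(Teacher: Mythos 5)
Your proposal is correct and follows essentially the same route as the paper: the paper likewise defines the two multiplication maps $m_1=m_{E(B)}\circ(E(\pi_z)\otimes E(\pi_A))$ and $m_2=m_{E(B)}\circ(E(\pi_A)\otimes E(\pi_z))$ (your $\Phi$ and $\Psi$), deduces their bijectivity from Lemmas \ref{The injection E(pi_A)} and \ref{the Yoneda product in E(B)}, invokes Theorem \ref{equivalent conditions of R-smash product} to obtain both smash-product decompositions with $R_E=m_1^{-1}\circ m_2$, and extracts the formula $R_E(f\otimes g)=(-1)^i g\otimes\tau(f)$ from the identity $m_2(f\otimes g)=(-1)^i m_1(g\otimes\tau(f))$. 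Your degree-by-degree verification of bijectivity and the explicit check that the swapped-role map equals $R_E^{-1}$ merely spell out steps the paper leaves implicit.
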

\begin{proof}
By Corollary \ref{E(A) and E(k[z]) are subalgebras fo E(B)}, we have two injections
$$
E(\pi_z):E(k[z])\hookrightarrow E(B),\quad E(\pi_A):E(A)\hookrightarrow E(B).
$$

Write
\begin{eqnarray*}
&&m_1:=m_{E(B)}\left(E(\pi_z)\otimes E(\pi_A)\right):E(k[z])\otimes E(A)\to E(B),\\
&&m_2:=m_{E(B)}\left(E(\pi_A)\otimes E(\pi_z)\right):E(A)\otimes E(k[z])\to E(B),
\end{eqnarray*}
where $m_{E(B)}$ is the Yoneda product of $E(B)$. Clearly, for any $f\in E(A)$, $m_1(1\otimes f)=f$ and $m_2(f\otimes 1)=f$  by Lemma \ref{The injection E(pi_A)}. So $m_1$ and $m_2$ are both isomorphisms of bigraded vector spaces following from Lemma \ref{the Yoneda product in E(B)}.

Using Theorem \ref{equivalent conditions of R-smash product}, there exist $k$-linear isomorphisms of bigraded vector spaces:
$$
\xymatrix{
E(A)\otimes E(k[z])&&  E(k[z])\otimes E(A) \ar@<1ex>[ll]^{(R_E)^{-1}:=m_2^{-1}\circ m_1}\ar@<1ex>[ll];[]^{R_E:=m_1^{-1}\circ m_2}
}
$$
such that $E(B)\cong E(k[z])\#_{R_E} E(A)\cong E(A)\#_{R_E^{-1}} E(k[z])$ as bigraded algebras.

For any $f\in E^i(A)$ and $g\in E^1(k[z])$, by Lemma \ref{the Yoneda product in E(B)}, we have
$$
m_2(f\otimes g)=(-1)^im_1(g\otimes \tau(f)).
$$
Hence, $R_E(f\otimes g)=(-1)^ig\otimes \tau(f)$.
\end{proof}

\begin{example}
We compute the Ext-algebra of quantum plane $k_p[x,y]$ as an easy example, where $p\in k^\times$. Write $A=k[x]$ and $k_p[x,y]\cong A[y;\sigma]$ where $\sigma$ is an automorphism of $A$ by sending $x$ to $px$. The Ext-algebra of $A$ is isomorphic to $k\langle u\rangle/(u^2)$ and $E(k[y])\cong k\langle v\rangle/(v^2)$. Choose $P^\centerdot:0\to A(-1)\xlongrightarrow{-\cdot x}A\to 0$ to be the minimal free resolution of $_Ak$. Then we have a commutative diagram
$$
\xymatrix{
0\ar[r]              &{^{\sigma^{-1}}A}(-1)\ar[r]^{-\cdot x}\ar[d]^{p\cdot\sigma}      &{^{\sigma^{-1}}A}\ar[r]\ar[d]^{\sigma} &0\\
0\ar[r]              &A(-1)\ar[r]^{-\cdot x}          &A\ar[r]     &0.
}
$$
It induces an algebra automorphism of $E(A)$ by sending $u$ to $pu$. By Theorem \ref{Ext algebra of Ore extension is an R-smash product}, the Ext-algebra of quantum plane is
$$
E(k_q[x,y])\cong \left(k\langle u\rangle/(u^2)\right)\,\#_{R_E}\, \left(k\langle v\rangle/(v^2)\right)\cong k\langle u,v \rangle/(u^2,v^2,vu+puv),
$$
where $R_E(v\otimes u)=-pu\otimes v$.
\end{example}

Finally, we give two applications of Theorem \ref{Ext algebra of Ore extension is an R-smash product}.

For a connected graded algebra $A$ generated in degree $1$, it is called a $\mathcal{K}_p$ algebra if $E(A)$ is generated by $\{E^{i}(A)\}_{i=1}^p$ as an associative algebra. In particular, $\mathcal{K}_2$ algebras are the class of algebras defined in \cite{CS}, which is considered as a natural generalization of Koszul algebras. For such a class of algebras, we have an immediate result from Theorem \ref{Ext algebra of Ore extension is an R-smash product}, which contains a special case of \cite[Theorem 1.2]{Phan}.
\begin{corollary}
Let $A$ be a connected graded algebra generated in degree $1$, and $\sigma$ be an automorphism of $A$. Let $B=A[z;\sigma]$ be a graded skew extension of $A$. Then $A$ is a $\mathcal{K}_p$ algebra if and only if $B$ is a $\mathcal{K}_p$ algebra.
\end{corollary}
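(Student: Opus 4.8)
The plan is to read both implications off the structural results already established, using the smash-product decomposition of Theorem~\ref{Ext algebra of Ore extension is an R-smash product} for one direction and the functoriality of Theorem~\ref{E(-) is a functor} for the other. The only fact about the polynomial factor that is needed is that its minimal free resolution $\Cone(\rho'_z)$ has length one, so $E(k[z])=k\oplus k\xi$ with $\xi\in E^1(k[z])$ and $E^{\geq 2}(k[z])=0$; in particular $E(k[z])$ is generated, as an augmented algebra, in cohomological degree $1$. I also note that for $B$ to be a candidate $\mathcal{K}_p$ algebra one needs $\deg z=1$, so that $B$ is generated in degree $1$ like $A$.

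Suppose first that $A$ is $\mathcal{K}_p$. By Theorem~\ref{Ext algebra of Ore extension is an R-smash product} together with Theorem~\ref{equivalent conditions of R-smash product}, the multiplication map $m_{E(B)}\circ(E(\pi_z)\otimes E(\pi_A))$ is an isomorphism of bigraded vector spaces, so $E(B)=E(\pi_z)(E(k[z]))\cdot E(\pi_A)(E(A))$. Hence $E(B)$ is generated as an algebra by the two subalgebras $E(\pi_z)(E(k[z]))$ and $E(\pi_A)(E(A))$. The former is generated by $E(\pi_z)(\xi)\in E^1(B)$ (Lemma~\ref{The injection E(pi_z)}); the latter, since $A$ is $\mathcal{K}_p$, is generated by $\{E(\pi_A)(E^i(A))\}_{i=1}^{p}$, and these lie in $\{E^i(B)\}_{i=1}^{p}$ because $E(\pi_A)$ is bigraded (Lemma~\ref{The injection E(pi_A)}). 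Collecting generators, $E(B)$ is generated by $\{E^i(B)\}_{i=1}^{p}$, so $B$ is $\mathcal{K}_p$.

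For the converse I would pass to the inclusion $\iota_A\colon A\to B$. Applying the contravariant functor $E(-)$ of Theorem~\ref{E(-) is a functor} to $\pi_A\circ\iota_A=\id_A$ yields $E(\iota_A)\circ E(\pi_A)=\id_{E(A)}$, so $E(\iota_A)\colon E(B)\to E(A)$ is a split surjective bigraded algebra homomorphism. A surjective algebra homomorphism sends a generating set to a generating set, and being bigraded it cannot raise cohomological degree; therefore if $E(B)$ is generated by $\{E^i(B)\}_{i=1}^{p}$, then $E(A)=E(\iota_A)(E(B))$ is generated by $E(\iota_A)(\{E^i(B)\}_{i=1}^{p})\subseteq\{E^i(A)\}_{i=1}^{p}$, i.e.\ $A$ is $\mathcal{K}_p$.

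Neither step is a genuine obstacle once the preceding machinery is in place; the only care required is bookkeeping of the cohomological grading, which is precisely why it matters that $E(\pi_z)$, $E(\pi_A)$ and $E(\iota_A)$ are bigraded and that $E(k[z])$ is concentrated in cohomological degrees $0$ and $1$. Degenerate contributions from $E^0=k$ are harmless, since $E^0$ is the unit and supplies no new generators.
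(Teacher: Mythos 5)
Your proof is correct and is precisely the argument the paper leaves implicit: the corollary is stated there without proof, as an ``immediate result'' of Theorem~\ref{Ext algebra of Ore extension is an R-smash product}. Your forward direction (the bigraded isomorphism $m_{E(B)}\circ(E(\pi_z)\otimes E(\pi_A))$ reducing generators of $E(B)$ to $E(\pi_z)(\xi)\in E^1(B)$ and $E(\pi_A)(E^i(A))\subseteq E^i(B)$ for $i\leq p$) and your converse (the split surjection $E(\iota_A)$ from Theorem~\ref{E(-) is a functor} carrying a degree-$\leq p$ generating set of $E(B)$ onto one for $E(A)$) are exactly the intended reading, and your remark that $\deg z=1$ is forced for $B$ to be generated in degree $1$ is a point the paper's statement tacitly assumes.
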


The other application is about the Frobenius property and Artin-Schelter regular algebras. We say a finite dimensional algebra $E$ is a \emph{Frobenius algebra}, if there exists a  nondegenerate associative bilinear form $\langle-,-\rangle:E\times E\to k$. We refer \cite{Sm} for more details. 


\begin{corollary}\label{Ext preserves Frobenius}
Let $A$ be a connected graded algebra and $\sigma$ be an automorphism of $A$. Let $B=A[z;\sigma]$ be a graded skew extension of $A$. If the Ext-algebra $E(A)$ is a  Frobenius algebra, then Ext-algebra $E(B)$ is also a Frobenius algebra.
\end{corollary}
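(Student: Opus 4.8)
The plan is to feed the smash-product description of $E(B)$ from Theorem \ref{Ext algebra of Ore extension is an R-smash product} into an explicit construction of a Frobenius functional on $E(B)$ out of one on $E(A)$. Recall from the resolution $\Cone(\rho_z')$ that $E(k[z])$ is concentrated in cohomological degrees $0$ and $1$, with $E^0(k[z])=k$ and $E^1(k[z])=k\xi$; hence $E(k[z])=k\oplus k\xi$ is the two-dimensional algebra with $\xi^2=0$ (as $E^2(k[z])=0$), which is plainly Frobenius. Writing $\bar\xi:=E(\pi_z)(\xi)\in E^1(B)$ and using that $E(\pi_z)$ is an algebra homomorphism, we get $\bar\xi^2=E(\pi_z)(\xi^2)=0$. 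The isomorphism $E(B)\cong E(A)\#_{R_E^{-1}}E(k[z])$ then identifies $E(B)$ with the free left $E(A)$-module $E(B)=E(A)\oplus E(A)\,\bar\xi$, and Lemma \ref{the Yoneda product in E(B)} shows that for homogeneous $f\in E^{|f|}(A)$ the products $\bar\xi f$ and $f\bar\xi$ both occupy the summand $V_{|f|}^*(l)\subseteq E^{|f|+1}(B)$, with values $(-1)^{|f|}f$ and $\tau(f)$ respectively. Comparing these yields the commutation rule
\[
\bar\xi\, g=(-1)^{|g|}\,\tau^{-1}(g)\,\bar\xi,\qquad g\in E(A).
\]
Since $E(A)$ is Frobenius it is finite dimensional, so $\dim_k E(B)=2\dim_k E(A)<\infty$ is automatic.

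Next I would transport the form. Since $E(A)$ is Frobenius, choose a linear functional $\lambda_A\colon E(A)\to k$ whose bilinear form $\langle a,a'\rangle_A:=\lambda_A(aa')$ is nondegenerate, and define
\[
\lambda_B\colon E(B)\longrightarrow k,\qquad \lambda_B(f_0+f_1\bar\xi):=\lambda_A(f_1)\quad(f_0,f_1\in E(A)),
\]
together with $\langle x,y\rangle_B:=\lambda_B(xy)$. This form is associative for free, as $\langle xy,z\rangle_B=\lambda_B(xyz)=\langle x,yz\rangle_B$, so only nondegeneracy must be checked. Using $\bar\xi^2=0$ and the commutation rule, a short computation gives, for $x=f_0+f_1\bar\xi$ and homogeneous $y=g_0+g_1\bar\xi$,
\[
xy=f_0g_0+\bigl(f_0g_1+(-1)^{|g_0|}f_1\,\tau^{-1}(g_0)\bigr)\bar\xi,
\]
so that $\langle x,y\rangle_B=\lambda_A\bigl(f_0g_1+(-1)^{|g_0|}f_1\,\tau^{-1}(g_0)\bigr)$. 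Suppose $\langle x,y\rangle_B=0$ for all $y$. Specializing to homogeneous $y$ with $g_0=0$ forces $\lambda_A(f_0g_1)=0$ for all $g_1$, whence $f_0=0$ by nondegeneracy of $\langle-,-\rangle_A$; specializing to $g_1=0$ forces $\lambda_A(f_1\tau^{-1}(g_0))=0$ for all $g_0$, and since $\tau^{-1}$ is bijective this gives $\lambda_A(f_1h)=0$ for all $h\in E(A)$, hence $f_1=0$. Thus $x=0$, the form is nondegenerate, and $E(B)$ is Frobenius.

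The only genuine subtlety is the $\tau$-twist in the commutation rule, which is precisely the twist recorded by $R_E(f\otimes g)=(-1)^ig\otimes\tau(f)$ in Theorem \ref{Ext algebra of Ore extension is an R-smash product}. A priori this twist could obstruct the naive tensor-product form $\lambda_A\otimes\lambda_z$ inherited from the two factors, but because $\tau$ is an algebra \emph{automorphism} (hence bijective) and the multiplication is ``upper triangular'' with respect to $E(B)=E(A)\oplus E(A)\bar\xi$, the two components $f_0,f_1$ decouple in the nondegeneracy argument and the twist does no harm. I expect the displayed product formula for $xy$—in particular keeping track of the signs arising from the cochain-complex shifts in Lemma \ref{the Yoneda product in E(B)}—to be the only step requiring care; everything else is formal.
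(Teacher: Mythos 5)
Your proposal is correct and takes essentially the same route as the paper: both transport the Frobenius structure of $E(A)$ (together with that of $E(k[z])\cong k\oplus k\xi$, $\xi^2=0$) through the smash-product decomposition of Theorem \ref{Ext algebra of Ore extension is an R-smash product}, using the commutation data of Lemma \ref{the Yoneda product in E(B)} to handle the $\tau$-twist and sign. The only difference is presentational: you build the form from a functional $\lambda_B$ so that associativity is automatic and only nondegeneracy (one-sided, which suffices since $\dim_k E(B)=2\dim_k E(A)<\infty$) needs checking, whereas the paper posits the case-defined bilinear form directly in the $E(k[z])\#_{R_E}E(A)$ ordering and asserts both properties by straightforward computation; your form, rewritten through the ordering $m_2$ versus $m_1$, agrees with theirs.
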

\begin{proof}
 By Theorem \ref{Ext algebra of Ore extension is an R-smash product}, $E(B)\cong E(k[z])\#_{R_E}E(A)$ for some graded linear map $R_E:E(A)\otimes E(k[z])\to E(k[z])\otimes E(A)$. Let $\tau$ be the  bigraded automorphism of $E(A)$ in Lemma \ref{the Yoneda product in E(B)}.

Clearly $E(k[z])$ is a Frobenius algebra. Write the nondegenerate associative bilinear forms for $E(A)$ and $E(k[z])$ are $\langle-,-\rangle_A$ and $\langle-,-\rangle_z$ respectively. Define a bilinear form $\langle-,-\rangle:E(B)\times E(B)\to k$ satisfying
$$
\langle g_1\#_{R_E} f_1,g_2\#_{R_E} f_2\rangle=
\left\{
\begin{array}{cl}
\langle g_1,g_2\rangle_z\langle f_1,f_2\rangle_A, &\text{if } g_2\in E^0(k[z]),\\
(-1)^i\langle g_1,g_2\rangle_z\langle \tau(f_1),f_2\rangle_A, &\text{if } g_2\in E^1(k[z]),
\end{array}
\right.
$$
where $f_1\in E^i(A),f_2\in E(A)$ and $g_1\in E(k[z])$ for $i>0$.
By straightforward computation, the bilinear form $\langle-,-\rangle$ is  nondegenerate associative. So $E(B)$ is a Frobenius algebra.
\end{proof}

We say a connected graded algebra $A$ is \emph{Artin-Schelter regular} if the global dimension of $A$ equals $d<\infty$, $\uExt_A^i(k,A)=0$ for $i\neq d$ and $\uExt_A^d(k,A)$ is $1$-dimensional. By the approach of $A_\infty$-algebras, we know that $A$ is Artin-Schelter regular if and only if the Ext-algebra $E(A)$ is Frobenius \cite[Corollary D]{LPWZ4}. Now we have a new way to prove graded Ore extension preserving Artin-Schelter regularity.

\begin{corollary}
Let $A$ be an Artin-Schelter regular algebra. Let $\sigma$ be a graded automorphism of $A$ and $\delta$ be a $\sigma$-derivation of $A$. Then graded Ore extension $A[z;\sigma,\delta]$ is Artin-Schelter regular.
\end{corollary}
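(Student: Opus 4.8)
The plan is to reduce the general graded Ore extension to the graded skew extension already treated in Corollary \ref{Ext preserves Frobenius}, and then to transport Artin-Schelter regularity along an associated-graded filtration. Throughout I use the criterion of \cite[Corollary D]{LPWZ4}, that a connected graded algebra $C$ is Artin-Schelter regular if and only if its Ext-algebra $E(C)$ is Frobenius; this translates every regularity assertion into a Frobenius assertion about Ext-algebras, which is precisely the language of the present paper.

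First I would settle the case $\delta=0$. Since $A$ is Artin-Schelter regular, $E(A)$ is Frobenius by \cite[Corollary D]{LPWZ4}. Corollary \ref{Ext preserves Frobenius} then shows that $E(A[z;\sigma])$ is again Frobenius, and the same criterion gives that the skew extension $A[z;\sigma]$ is Artin-Schelter regular. Thus the statement holds whenever the $\sigma$-derivation vanishes.

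To handle a nonzero $\delta$, I would filter $B=A[z;\sigma,\delta]$ by $z$-degree, putting $F_pB=\sum_{i\le p}Az^i$. The defining relation $za=\sigma(a)z+\delta(a)$ shows that $F_\bullet$ is an exhaustive, separated algebra filtration by graded subspaces, and that the lower-order term $\delta(a)$ vanishes in the associated graded; hence $\gr_F B\cong A[z;\sigma]$ as connected graded algebras, and by the previous paragraph $\gr_F B$ is Artin-Schelter regular.

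The remaining step, lifting Artin-Schelter regularity from $\gr_F B$ to $B$, is where I expect the main difficulty, since it falls outside the Ext-algebra machinery developed here. Concretely, I would invoke the standard filtered-to-graded transfer principle: a minimal graded free resolution of $k$ over $\gr_F B$ lifts to a finite graded free resolution of $k$ over $B$ (exploiting that $B$ is free as a left $A$-module on $\{z^i\}$, exactly as in the cone construction $F^\centerdot=\Cone(\rho_z\otimes_A P^\centerdot)$ of Lemma \ref{minimal free resolution of Bk} but with a $\delta$-twisted differential), so that $\gldim B\le\gldim\gr_F B<\infty$, while a Rees-algebra spectral sequence relating $\uExt^*_B(k,B)$ to $\uExt^*_{\gr_F B}(k,\gr_F B)$ carries the Gorenstein condition upward. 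Writing $d=\gldim A+1$, one then obtains $\uExt^i_B(k,B)=0$ for $i\ne d$ and $\dim_k\uExt^d_B(k,B)=1$, which is exactly the Artin-Schelter regularity of $B=A[z;\sigma,\delta]$.
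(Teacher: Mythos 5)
Your proposal is correct and follows essentially the same route as the paper: the $\delta=0$ case via \cite[Corollary D]{LPWZ4} together with Corollary \ref{Ext preserves Frobenius}, followed by the $z$-degree filtration with $\gr_F B\cong A[z;\sigma]$. The only difference is that where you sketch the filtered-to-graded transfer of Artin-Schelter regularity by hand (lifted resolutions plus a Rees-algebra spectral sequence), the paper simply cites \cite[Theorem 3.6]{ZZ1}, which is exactly this known transfer principle, so your acknowledged ``main difficulty'' is covered by the literature.
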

\begin{proof}
We consider the case of graded skew extension $B=A[z;\sigma]$ firstly. By \cite[Corollary D]{LPWZ4}, one obtains the Ext-algebra $E(A)$ is Frobenius, which implies the Ext-algebra $E(B)$ is also Frobenius by Corollary \ref{Ext preserves Frobenius}. Hence $B$ is a Artin-Scehlter regular algebra.

There is a canonical filtration on $A[z;\sigma,\delta]$ such that the associated graded algebra is the graded skew extension $B$. Then the result follows from \cite[Theorem 3.6]{ZZ1}
\end{proof}

\vskip4.1mm

\noindent {\bf Acknowledgments.} Y. Shen is supported by the NSFC (Grant Nos. 11626215, 11701515) and Science Foundation of Zhejiang Sci-Tech University (Grant No. 16062066-Y); X. Wang is supported by the NSFC (Grant No. 11671351); G.-S. Zhou is supported by the NSFC (Grant No. 11601480).

\end{document}